\newif\ifgraphs
\newcommand{\R}{\mathbb R}
\newcommand{\Q}{\mathbb Q}
\newcommand{\Z}{\mathbb Z}
\newcommand{\F}{\mathbb F}
\newcommand{\eps}{\varepsilon}
\newcommand{\calA}{\mathcal A}
\newcommand{\calE}{\mathcal E}
\newcommand{\Epr}{\calE_{\mathrm{pr}}}
\newcommand{\Eall}{\calE_{\mathrm{all}}}
\newcommand{\Eht}{\calE_{\mathrm{ht}}}
\newcommand{\new}{\mathrm{new}}
\newcommand{\bmx}{\left( \begin{matrix}}
\newcommand{\emx}{\end{matrix} \right)}
\renewcommand{\mod}{\bmod}
\newcommand{\leg}{\overwithdelims ()}
\DeclareMathOperator{\tr}{tr} 
\DeclareMathOperator{\GL}{GL}
\newtheorem{lem}{Lemma}
\numberwithin{lem}{section}
\newtheorem{prop}[lem]{Proposition}
\newtheorem{cor}[lem]{Corollary}
\newtheorem{conj}[lem]{Conjecture}
\numberwithin{rem}{section}
\crefname{conj}{Conjecture}{Conjectures}
\numberwithin{equation}{section}
\title{Rank bias for elliptic curves mod $p$}
\author{Kimball Martin}
\author{Thomas Pharis}
\address{Department of Mathematics, University of Oklahoma, Norman, OK 73019}
\email{kimball.martin@ou.edu}
\address{Department of Mathematics, Indiana University, Bloomington, IN 47405}
\date{\today}
\begin{document}

\maketitle

\begin{abstract} 
We conjecture that, for a fixed prime $p$, rational elliptic curves
with higher rank tend to have more points mod $p$.  We show that there is
an analogous bias for modular forms with respect to root numbers, 
and conjecture that the order of the rank bias
for elliptic curves is greater than that of the root number bias for modular forms.
\end{abstract}

\section{Introduction and conjectures} \label{sec1}

A natural question is whether rational elliptic curves $E$ with more global points
have more points mod $p$.  More precisely, does 
$a_p = a_p(E) =  p+1 - \# E(\mathbb F_p)$ (for $p$ a good prime) 
tend to be smaller when $E$ has larger rank?

Much is understood in the ``horizontal'' direction.
Namely if $E$ is fixed, then the Sato--Tate conjecture (now a theorem by \cite{BGHT}) 
asserts the limiting distribution
of $\frac{a_p}{\sqrt p}$ is independent of the rank of $E$.  However, if one numerically
computes examples for various $E$, one notices an apparent bias---the 
$a_p$'s tend to be smaller when $E$ has larger rank.  Necessarily such a bias would be 
bounded by the error term in the convergence to the Sato--Tate distribution.  
Indeed, one may interpret the Birch and Swinnerton-Dyer conjecture 
as a certain measure of this bias.  
In its original formulation in \cite{BSD}, it asserts that, up to a constant,
the order of growth of $\prod_{p < X} \frac{\# E(\mathbb F_p)}{p}$ is $(\log X)^r$,
where $p$ runs over primes of good reduction and $r$ is the rank of $E(\Q)$.
This may be very loosely interpreted as saying that elliptic curves with higher ranks
have more points mod $p$ for large primes $p$.
We also remark that Nagao's conjecture makes a similar prediction for averages
over 1-parameter families of elliptic curves.

Here we investigate the above question in the ``vertical'' direction: for a fixed prime
$p$ and varying elliptic curves $E$ of rank $r$, does $a_p$ tend to be smaller
the larger $r$ is?  Or to put it loosely: do elliptic curves with higher ranks also have
more points mod $p$ for small primes $p$?
A basic issue is how to try to measure such a bias,
as the Birch and Swinnerton-Dyer framework (essentially a weighted geometric mean of
$a_p$'s as $p$ varies) has no obvious analogue for a fixed $p$.

Since such problems seem very difficult to tackle theoretically, we investigate
this question computationally, and theoretically study an analogous question
for modular forms.  We first discuss the case of modular forms, which will help
motivate our framework for measuring and conjecturing bias for elliptic curves.

\subsection{Root number bias for modular forms}
Elliptic curves  $E$ of conductor $N$ correspond to rational newforms $f \in S_2(N)$
of weight 2 and level $N$ such that $L(s,E) = L(s,f)$, and thus each $a_p(f) = a_p(E)$.  
The root number $w$ 
of $f$ (or $E$) is $\pm 1$, which is the sign in the functional equation of the $L$-function.  
The analytic rank of this $L$-function is even or odd according to whether
$w$ is $+1$ or $-1$.  According to the Birch and Swinnerton-Dyer conjecture,
the analytic rank should be the same as the algebraic rank.  
The minimalist conjecture predicts that 100\% of the time, the rank is 0 or 1,
according to the root number being $+1$ or $-1$.  

In \cref{cor:mf-bias}, we show that for a fixed prime $p$, the average of $a_p(f)$ 
over newforms $f \in S_2(N)$ with root number $\pm 1$ grows approximately like
$\pm\frac {1}{\sqrt N}$ for large squarefree $N$ prime to $p$.
In fact, we more generally treat newforms of even weight $k$,
and the method applies to arbitrary Fourier coefficients $a_n(f)$, however the
signs for $k \equiv 0 \mod 4$ are opposite to those for $k \equiv 2 \mod 4$.

We now recast this bias in terms of weighted averages of Fourier coefficients over
varying $N$.
Let $\mathcal F^\pm(X)$ be the union of the sets of newforms $f$ with root number
$\pm 1$ in $S_2(N)$, as $N$ ranges over squarefree levels less than $X$.
For a newform $f$, let $N_f$ denote its exact level, i.e., $f$ is a newform in $S_2(N_f)$.
Let $\phi: \mathbb N \to \R_{>0}$ be a monotonic non-decreasing function
of at most polynomial growth.  We call such a $\phi$ a \emph{weight function}.
Consider the weighted average of $p$-th Fourier coefficients,
\begin{equation} \label{eq:mf-avg}
 \calA^\pm(p, X; \phi) = \frac 1{\# \mathcal F^\pm(X)}\sum_{f \in \mathcal F^\pm(X)} a_p(f) \phi(N_f).
\end{equation}
Then \cref{cor:mf-bias} implies the following.

\begin{prop} \label{prop1}
Fix a prime $p$, a weight function $\phi$ as above, and $\epsilon > 0$.
\begin{enumerate}
\item For any $\phi$, $\mathcal A^+(p, X; \phi) > \mathcal A^-(p, X; \phi)$ for sufficiently
large $X$.

\item If $\phi(N) \ll N^{\frac 12 - \epsilon}$, then $\mathcal A^\pm(p, X; \phi) \to 0$ as $X \to \infty$.

\item If $\phi(N) \gg N^{\frac 12 + \epsilon}$, then $\mathcal A^\pm(p, X; \phi) \to \pm \infty$
as $X \to \infty$.
\end{enumerate}
\end{prop}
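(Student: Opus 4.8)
The plan is to reduce $\mathcal A^\pm$ to sums over levels and read off the three claims by comparing growth rates. By definition $\mathcal F^\pm(X)$ is the disjoint union, over squarefree $N<X$, of the sets $S_N^\pm$ of newforms in $S_2(N)$ with root number $\pm1$, so writing $d_N^\pm=\#S_N^\pm$ and $s_N^\pm=\sum_{f\in S_N^\pm}a_p(f)$ I would record
\[
\mathcal A^\pm(p,X;\phi)=\frac{\sum_{N<X}\phi(N)\,s_N^\pm}{\sum_{N<X}d_N^\pm}.
\]
For the denominator I would use the dimension formula $\dim S_2^{\new}(N)\asymp N$ together with the bound $|d_N^+-d_N^-|\ll N^{1/2+\epsilon}$ (an Atkin--Lehner trace) to conclude $\sum_{N<X}d_N^\pm\asymp X^2$. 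The whole question then becomes the size and sign of the numerator, and the strategy is to show that the terms with $p\nmid N$ produce the bias while the terms with $p\mid N$ constitute an error to be controlled.

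For $p\nmid N$, \cref{cor:mf-bias} gives $s_N^\pm=\pm c_p\,d_N^\pm N^{-1/2}+o(d_N^\pm N^{-1/2})$ for a fixed constant $c_p>0$, uniformly in $N$; since $d_N^\pm\asymp N$, each such term has a definite sign $\pm$ and size $\asymp\sqrt N$. The per-term relative error is $o(1)$ uniformly in $N$, and hence remains a lower-order contribution after weighting by any polynomially bounded $\phi$ and summing, so the $p\nmid N$ part of the numerator equals $\pm(1+o(1))$ times a positive quantity of order $\sum_{N<X}\phi(N)\sqrt N$. The three assertions now follow from the elementary estimates $\sum_{N<X}\phi(N)\sqrt N\ll X^{2-\epsilon}$ when $\phi(N)\ll N^{1/2-\epsilon}$ and $\sum_{N<X}\phi(N)\sqrt N\gg X^{2+\epsilon}$ when $\phi(N)\gg N^{1/2+\epsilon}$: for (1) the numerators of $\mathcal A^+$ and $\mathcal A^-$ have opposite signs over positive denominators; for (2) the ratio is $\ll X^{-\epsilon}\to0$; and for (3) it is $\gg X^{\epsilon}$ with sign $\pm$, hence tends to $\pm\infty$.

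The main obstacle is the contribution of the levels $N$ divisible by $p$, which \cref{cor:mf-bias} excludes and which are a priori of the same order as the bias rather than a lower-order error. For squarefree $N$ with $p\mid N$ one has $a_p(f)=\pm1$, so $s_N^\pm$ is, up to sign, a trace of the Atkin--Lehner operator $W_p$ on a root-number eigenspace of $S_2^{\new}(N)$; the pointwise bound $|s_N^\pm|\ll N^{1/2+\epsilon}$ only shows that this part of the numerator is $\ll p^{-1}X^{\epsilon}$ times the main term, i.e.\ larger by a factor $X^{\epsilon}$. The crux is therefore to extract genuine cancellation from $\sum_{p\mid N<X}\phi(N)\,s_N^\pm$ as $N$ ranges over $\{pM:M<X/p\}$, showing it is $o$ of the main term; I expect this to require the trace-formula input behind \cref{cor:mf-bias} rather than the pointwise bound, and it is the one step I do not regard as routine. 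Granting it, the leading behaviour is governed entirely by the $p\nmid N$ terms and the proposition follows.
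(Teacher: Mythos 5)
Your reduction is exactly the paper's argument: write the numerator of $\mathcal A^\pm(p,X;\phi)$ as $\sum_{N<X}\phi(N)\,\tr_{S_2^\new(N)^\pm}T_p$, show the denominator is $\asymp X^2$ (up to $X^{\epsilon}$) via $\dim S_2^\new(N)^{\pm}=\frac{\varphi(N)}{12}+O(N^{1/2}\log N)$, and feed in \cref{cor:mf-bias} for the sign and size of each trace. One inaccuracy: you quote \cref{cor:mf-bias} as an asymptotic $s_N^\pm=\pm c_p\,d_N^\pm N^{-1/2}(1+o(1))$ with a fixed constant. It gives no such thing --- the main term is $\pm\frac14 H(4pN)$, a class number, which fluctuates by factors of $N^{\pm\epsilon}$ --- only the two-sided bounds $N^{1/2-\epsilon}\ll\pm s_N^\pm\ll N^{1/2}\log N$. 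Your subsequent estimates use only these two-sided bounds, so nothing breaks, but the claim should be stated in that weaker form.

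The obstacle you flag, levels with $p\mid N$, is a genuine subtlety in the statement as literally written, and the paper simply does not address it: its proof passes from \cref{cor:mf-bias} (which assumes $(n,N)=1$) to the proposition without comment. The intended reading, consistent with the coprimality condition $(N_E,p)=1$ imposed in the elliptic-curve analogue \eqref{eq:wt-avg}, is that the levels divisible by $p$ are excluded, and under that reading your argument is complete and coincides with the paper's. If those levels are kept, your diagnosis is right that the trivial bound $|a_p(f)|=1$ is useless and the pointwise bound $|s_N^\pm|\ll N^{1/2+\epsilon}$ loses an $X^{\epsilon}$ against the Siegel lower bound for the main term; this is harmless for part (2) but not for the signs in parts (1) and (3). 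The fix is not deep, though: for $p\parallel N$ one has $a_p(f)=-\lambda_p(f)$, so $\tr_{S_2^\new(N)^\pm}T_p=\frac12\bigl(-\tr_{S_2^\new(N)}W_p\pm\tr_{S_2^\new(N)}W_{N/p}\bigr)$, and the same Yamauchi--Skoruppa--Zagier class-number formulas evaluate both Atkin--Lehner traces; one then checks the dominant term (a class number of discriminant $\asymp N/p$) carries the matching sign $\pm$, so these levels reinforce rather than oppose the bias. In short: your skeleton is the paper's, the step you declined to carry out is one the paper also omits, and it is fillable by the same trace-formula input you already invoke.
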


The polynomial growth condition on $\phi$ is not actually needed for this proposition,
but we impose it for the purposes of preventing erratic behavior in
weighted averages for sequences with greater variation.

The first statement asserts that there is a persistent root number bias in the $a_p$'s;
in fact, for any $\phi$, $\mathcal A^+(p, X; \phi) > 0$ and 
$\mathcal A^-(p, X; \phi) < 0$ for sufficiently large $X$, which corresponds to the
sign matching in assertion (3).
The latter two statements quantify the size of the bias---they 
say the bias is roughly on the order of the inverse square root of the level.
While \cref{prop1} is less precise than \cref{cor:mf-bias},
this formulation provides a model for investigating and measuring bias for elliptic curves. 

Our original question about the rank elliptic curves affecting the size of
$a_p$'s is geometrically motivated.  One can also interpret the bias
in \cref{prop1} geometrically as follows.

Let $f \in S_2(N)$ be a newform with rationality field
$K_f$.  Its rationality degree $d = [K_f: \Q]$ equals the number of Galois conjugates 
$f^\sigma$ of $f$.  Now (the Galois orbit of) $f$ corresponds to a rational 
abelian variety $A_f$ of dimension $d$ such that $L(s,A_f) = \prod_\sigma L(s,f^\sigma)$.
Let $a_p(A_f) = \sum_\sigma a_p(f^\sigma) = \tr_{K_f/\Q} a_p(f)$.  Then, for $p \nmid N$,
$\#A_f(\F_p) = \prod_\sigma (p+1 - a_p(f^\sigma))$,
which is a degree $d$ monic polynomial in $p$, and the coefficient of $p^{d-1}$
is $-a_p(A_f)$.  Hence we may think of 
$a_p(A_f) p^{d - 1}$ being a ``first order estimate'' for $p^d - \#A_f(\F_p)$.

For simplicity, say $N$ is prime.  Then $S_2(N)$ has $2$ Atkin--Lehner
eigenspaces, each of size approximately $\frac{N-1}{24}$ by \cite{me:ref-dim}, 
corresponding to root numbers $+1$ and $-1$.  
We conjectured in \cite{me:maeda} that each Atkin--Lehner space
is generated by a single Galois orbit 100\% of the time.  Suppose
the root number $+1$ (resp.\ $-1$) newforms are all Galois conjugates of 
a single newform $f_+$ (resp.\ $f_-$).
The new part of the Jacobian $J_0(N)$ of $X_0(N)$ decomposes into 2 simple 
pieces $A_+ \oplus A_-$, where $A_\pm = A_{f_\pm}$, and
the average of the $a_p(f)$'s with root number $\pm 1$ is
approximately $\frac{24}{N-1} \cdot a_p(A_{\pm})$.  By the minimalist
philosophy, we further expect that 100\% of the time $A_+$ has rank 0 
and $A_-$ has rank $\dim A_- \approx \frac{N-1}{24}$.  
Now \cref{prop1} (or rather \cref{cor:mf-bias} to restrict to prime levels) 
says that, for fixed $p \nmid N$ and prime $N \to \infty$, 
$a_p(A_\pm)$ grows roughly like $\pm \sqrt N$.  In particular,
the $a_p(A)$'s tend to be smaller for higher-rank modular (GL(2) type) 
abelian varieties along this family.
These remarks extend to the case of squarefree $N$ with suitable modification.

\subsection{Rank bias for elliptic curves}
Two common ways of counting rational elliptic curves $E$ are to (partially) order
by conductor or (a suitable choice of) height. 
In addition, one can either count isomorphism classes or isogeny classes of curves, 
but it seems likely that this distinction will not significantly affect the statistics we consider.
Recall that $a_p(E)$ only depends on the isogeny class of $E$.

For definiteness, we let $\mathcal E$ be one of the following three families of 
(partially ordered classes of) rational elliptic curves: 
(i) isogeny classes of elliptic curves of prime conductor, (partially) ordered by conductor;
(ii) isogeny classes of all elliptic curves, ordered by conductor;
and (iii) isomorphism classes of all elliptic curves, 
ordered by (minimal) naive height.
These families are respectively denoted by $\Epr$, $\Eall$, and $\Eht$.

When $\calE = \Epr$ or $\Eall$, let $|E| = N_E$ denote the conductor of $E$.
When $\calE = \Eht$, let $|E|=H_E$ denote the minimum naive height in the isomorphism
class of $E$.
Let $\mathcal E(X)$ be the set of (classes of) elliptic curves in $\mathcal E$
with $|E| < X$. 
Let $\mathcal E_r(X)$ be the subset of $E$ in $\mathcal E(X)$ of rank $r$.
Set $\mathcal E_r = \bigcup_X \mathcal E_r(X)$.

For a weight function $\phi$ as above, we define the weighted average
\begin{equation} \label{eq:wt-avg}
 \calA_r(p, X; \phi) = \calA^{\mathcal E}_r(p, X; \phi) = \frac 1{\# \mathcal E_r(X)}\sum_{E \in \mathcal E_r(X), \, |E| < X, \, (N_E,p) = 1} a_p(E) \phi(N_E).
\end{equation}
Note that in all cases we are weighting by conductor, in analogy with the weighting
of modular forms by level in \eqref{eq:mf-avg}.
When ordering by height $H_E$, one could instead weight the averages 
by $\phi(H_E)$ rather than $\phi(N_E)$, and we briefly remark on this below.

The following two conjectures assert an analogue of \cref{prop1}
for quantifying a rank bias in the $a_p$'s of elliptic curves.

\begin{conj} [Existence of rank bias] \label{conj1}
Let $\mathcal E$ be $\Epr$, $\Eall$ or $\Eht$,
and fix $r \ge 0$.   Suppose $\mathcal E_r$ and $\mathcal E_{r+1}$ are infinite.
Then for any weight function $\phi$, 
$\calA_r(p, X; \phi) > \calA_{r+1}(p, X; \phi)$ for all sufficiently large $X$.
\end{conj}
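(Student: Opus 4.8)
The plan is to try to reduce the rank bias to a root-number bias and then import the mechanism behind \cref{cor:mf-bias}, while staying alert to the fact that the two biases \emph{compete}. Under the parity conjecture the root number of $E$ is $(-1)^r$, so rank $r$ and rank $r+1$ curves lie in opposite root-number classes. Writing $\mathcal E^\pm(X)$ for the root-number $\pm 1$ curves in $\mathcal E(X)$, the first goal would be the elliptic-curve analogue of \cref{prop1}: the conductor-weighted average of $a_p$ over $\mathcal E^\pm(X)$ should carry a bias of sign $\pm 1$, of size roughly an inverse power of the conductor, matching---under modularity and the decomposition $A_+ \oplus A_-$ described above---the trace-formula computation giving \cref{cor:mf-bias}. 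For the height family $\Eht$ one would attack this through the character sum $a_p(E) = -\sum_{x \bmod p} \left( \frac{x^3+Ax+B}{p} \right)$ for $E : y^2 = x^3+Ax+B$, summing over a height box after separating $(A,B)$ by the local conditions that fix the root number; for $\Epr$ and $\Eall$ one would instead need the elliptic-curve shadow of the Eichler--Selberg computation, now entangled with the hard problem of counting curves by conductor.

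The essential structural point is that the step $r \mapsto r+1$ splits into two cases of opposite character. When $r$ is even the parity bias ($\mathcal A_r$ skewed positive, $\mathcal A_{r+1}$ skewed negative) and the putative rank bias (higher rank, smaller $a_p$) push in the \emph{same} direction, so $\mathcal A_r > \mathcal A_{r+1}$ is the ``aligned'' case. When $r$ is odd the two biases \emph{conflict}: parity alone would make $\mathcal A_r$ (root number $-1$) smaller than $\mathcal A_{r+1}$ (root number $+1$), so the conjecture $\mathcal A_r > \mathcal A_{r+1}$ can hold only if the rank bias strictly dominates the root-number bias. This is exactly the phenomenon flagged in the abstract, and it shows that a root-number bias can never suffice on its own: it settles at most the case $r = 0$ versus $r = 1$, where the minimalist conjecture identifies rank $0$ with root number $+1$ and rank $1$ with root number $-1$ up to density zero, so that $\mathcal A_0 \approx \mathcal A^+ > 0 > \mathcal A^- \approx \mathcal A_1$.

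The main obstacle, and the reason the statement is only conjectural, is therefore twofold. First, for $r \ge 2$ one must control the average of $a_p$ over curves of a single \emph{exact} rank, a density-zero subfamily of its root-number class; the order of vanishing of $L(s,E)$ beyond its parity is not accessible to present averaging methods, and high-rank curves are too sparse and too arithmetically opaque for the character-sum or trace-formula inputs above to reach them. Second, in the conflicting (odd $r$) case one needs not merely a rank bias but a \emph{quantitatively stronger} one than the root-number bias, which requires understanding the joint distribution of $a_p$ and rank at a precision well beyond what the Birch and Swinnerton-Dyer conjecture, minimalist heuristics, or current analytic technology provide. I would expect this second point---producing a rank bias that provably overcomes the opposing root-number bias---to be the true crux, and it is presumably why one is driven to computational evidence rather than proof.
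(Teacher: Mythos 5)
The statement you were asked to prove is \cref{conj1}, which the paper does not prove: it is a conjecture, supported only by the computational evidence in \cref{sec3} (weighted averages over the Stein--Watkins and height databases). So there is no ``paper proof'' to match, and your decision not to claim a proof but instead to map out the structure of the problem is the right call. Your structural analysis---that under parity the step $r \mapsto r+1$ flips the root-number class, that for even $r$ the root-number bias and the putative rank bias are aligned while for odd $r$ they conflict, and that the conjecture therefore requires the rank bias to dominate the root-number bias---is essentially the same discussion the authors give in \cref{sec1} and flag in the abstract. You also correctly identify why the modular-forms result (\cref{cor:mf-bias}, via Yamauchi/Skoruppa--Zagier and Eichler--Selberg) cannot be transported: fixing the exact rank cuts out a subfamily invisible to trace-formula or character-sum averaging.

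One substantive error: you assert that the root-number mechanism ``settles'' the case $r=0$ versus $r=1$ via $\mathcal A_0 \approx \mathcal A^+ > 0 > \mathcal A^- \approx \mathcal A_1$. This contradicts the paper's own \cref{conj2}(2), which predicts $\calA_r(p,X;\phi) \to +\infty$ for \emph{both} $r=0$ and $r=1$ under suitably growing weights, and the paper explicitly remarks that the $a_p(E)$'s tend to be positive for rank $1$ rather than matching the root number $-1$. The point you are missing is quantitative: the root-number bias for modular forms has order $N^{-1/2}$, while the conjectured rank bias has order $(\log N)^{-\delta}$, which is vastly larger; so the sign of $\mathcal A_1$ is governed by the rank-$1$ bias (positive), not by the root-number class it sits in, and even the identification $\mathcal A^- \approx \mathcal A_1$ fails at the relevant scale because a density-zero subfamily of higher-rank curves can contribute at order exceeding $N^{-1/2}$. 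Consequently the $r=0$ versus $r=1$ case is no more accessible from \cref{prop1} than the others; nothing in \cref{conj1} is currently provable, and the paper's evidence is purely empirical.
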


\begin{conj}  [Order of rank bias] \label{conj2}
Let $\mathcal E$ be $\Epr$, $\Eall$ or $\Eht$, and fix $r \ge 0$. 
Suppose $\mathcal E_r$ is infinite.   
Then there exists a $\delta > 0$ 
such that for any $\epsilon > 0$ the following hold.

\begin{enumerate}
\item If $\phi(N) \ll (\log N)^{\delta - \epsilon}$, then $\calA_r(p, X; \phi) \to 0$ as $X \to \infty$

\item If $\phi(N) \gg (\log N)^{\delta - \epsilon}$, then $|\calA_r(p, X; \phi)| \to \infty$.  Moreover $\calA_r(p, X; \phi) \to +\infty$ if $r \le 1$ and
$\calA_r(p, X; \phi) \to -\infty$ if $r \ge 2$.
\end{enumerate}
\end{conj}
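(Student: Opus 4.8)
The plan is to mirror the architecture by which \cref{prop1} was deduced from \cref{cor:mf-bias}: first isolate a single pointwise bias estimate for the average of $a_p$ over rank-$r$ curves of a given conductor size, and then recover the two-sided threshold in \cref{conj2} by partial summation against $\phi$. Because the rank, unlike the root number, is not cut out by any local or trace-formula condition, the argument must be conditional: I would assume the Birch--Swinnerton-Dyer conjecture (to identify $r$ with the analytic rank), the Riemann Hypothesis for the associated $L$-functions, and that $\#\calE_r(X)$ grows like a fixed power $X^\theta$ (which is not known for $r \ge 2$, where even the infinitude of $\calE_r$ is open).

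The heart of the matter is the pointwise bias. Fixing $p$ and $r$, let $B_r(N)$ be the average of $a_p(E)$ over those $E \in \calE_r$ with $(N_E,p)=1$ and conductor in a dyadic window about $N$. I would aim to show
\[ B_r(N) \sim c_r (\log N)^{-\delta} \qquad (N \to \infty), \]
with $c_r > 0$ for $r \le 1$, $c_r < 0$ for $r \ge 2$, and $\delta$ the exponent of \cref{conj2}. The mechanism is the Weil explicit formula applied uniformly across the family: the central order of vanishing contributes a term proportional to $r$ on the zero side, while a fixed prime $p$ contributes a term proportional to $a_p(E)/\sqrt p$ on the prime side. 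Averaging over the family and isolating $p$ therefore forces the mean of $a_p$ to carry a drift linear in $r$; since this drift is spread over the $\asymp \log N$ primes effectively seen at conductor $N$, each individual coefficient inherits a bias of order $(\log N)^{-\delta}$, which dominates the $N^{-1/2}$ root-number bias of \cref{cor:mf-bias}. The sign dichotomy, and the fact that the crossover sits between $r=1$ and $r=2$, should emerge as the value of $r$ at which the rank-linear drift cancels the $r$-independent part of the prime-side main term; pinning down $\delta$ and this crossover is precisely a quantitative one-level-density (murmuration-type) computation, including its secondary term.

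Granting this estimate, the threshold statement is routine. Writing $\calA_r(p,X;\phi)$ as a Stieltjes integral of $B_r(N)\phi(N)$ against $d\,\#\calE_r(N)$ and normalizing by $\#\calE_r(X)$, the power-growth hypothesis concentrates the counting measure near $N = X$, where the slowly varying factor $(\log N)^{-\delta}\phi(N)$ is essentially $(\log X)^{\alpha - \delta}$ for $\phi(N) \approx (\log N)^\alpha$. Thus below the threshold ($\phi \ll (\log N)^{\delta-\eps}$) the normalized sum is a negative power of $\log X$ and tends to $0$, giving (1), while above it ($\phi \gg (\log N)^{\delta-\eps}$) the same computation yields a positive power of $\log X$ diverging with the sign of $c_r$, giving (2) and its sign dichotomy. \cref{conj1}, the mere existence of the bias, would follow a fortiori from $B_r(N) > B_{r+1}(N)$, i.e.\ from the monotonicity of $c_r$ in $r$.

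The main obstacle is the pointwise estimate, and the genuine difficulty there is that no present technique isolates curves of a single rank. Granting BSD and RH only converts the problem into one about central zeros, and extracting a secondary term of size $(\log N)^{-\delta}$ from the family-averaged explicit formula exceeds current equidistribution error terms. Worse, that formula controls sums of $a_q$ over ranges of $q$, whereas \cref{conj2} concerns a single fixed small prime $p$; passing from the former to the latter requires an additional independence/equidistribution input for $a_p$ within the fixed-rank subfamily. This last transfer---from a statement about the distribution of low-lying zeros of the whole family to the behavior of one Fourier coefficient over a rank-constrained subfamily---is where I expect the argument to be genuinely stuck with current tools.
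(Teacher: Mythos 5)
The statement you are trying to prove is a \emph{conjecture}: the paper offers no proof of it, and says explicitly that its evidence is ``purely computational'' (the weighted averages over the Stein--Watkins and height databases in \cref{sec3}). So there is no ``paper proof'' to match, and the relevant question is whether your proposal actually closes the statement. It does not. The entire content of \cref{conj2} is concentrated in your pointwise estimate $B_r(N) \sim c_r(\log N)^{-\delta}$ with the stated signs of $c_r$; the partial-summation step that converts this into the threshold dichotomy is, as you say, routine (it is the same bookkeeping used to get \cref{prop1} from \cref{cor:mf-bias}). But that pointwise estimate is asserted, not derived: the explicit-formula heuristic you describe gives, at best, control of $\sum_q a_q(E)\log q/\sqrt q$ averaged over the family, and you correctly identify that neither the isolation of a single fixed prime $p$ nor the restriction to a fixed-rank subfamily is achievable with it. Assuming BSD and RH does not change this. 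So the proposal is a strategy outline whose central lemma is at least as hard as the conjecture itself, and you acknowledge as much in your final paragraph.

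One further point where the heuristic is not merely incomplete but in tension with the statement. If the bias were an affine function of the rank, $B_r(N) \approx c - \lambda r$ with $\lambda > 0$, then Birch's equidistribution result (cited in \cref{sec1}) forces the rank-averaged bias to vanish for the height-ordered family, and under the minimalist conjecture the average rank is $1/2$, which would place the sign crossover between $r=0$ and $r=1$ --- i.e., it would predict $a_p$-averages of the \emph{opposite} sign for rank $1$ curves. The conjecture (and the data) instead put the crossover between $r=1$ and $r=2$, and the paper devotes a paragraph to exactly this surprise, noting that the positivity for both $r=0$ and $r=1$ conflicts with the naive root-number expectation and might be explained by the contribution of higher-rank curves or by the distinction between weighted and unweighted averages. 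Your claim that the crossover ``should emerge'' between $r=1$ and $r=2$ from the rank-linear drift calculation therefore needs a genuinely new input (a nonlinear dependence on $r$, or a secondary term beyond the one-level density) that the proposal does not supply.
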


The first conjecture asserts that there is a persistent bias toward the
$a_p$'s being smaller for larger ranks, when we consider sufficiently large
families of elliptic curves.  Note that in special families, e.g., quadratic twists,
$a_p$'s can be have in special ways which do not exhibit such a bias.
It is not known for which ranks $r$ there are infinitely many, or even any, 
curves of rank $r$ (see \cite{PPVW} for recent conjectures).
Hence we impose hypotheses on the infinitude of $\calE_r$ and $\calE_{r+1}$ 
to try to avoid considering families which are too small or special.

The second conjecture asserts that the rank bias has roughly inverse
polylogarithmic order in the conductor.  In fact, it is plausible that the order of 
bias is simply inverse logarithmic, i.e.,  one can take $\delta = 1$ 
in \cref{conj2}, but the data are not entirely clear (see \cref{sec3}).  
Note that if we take the
constant weight function $\phi(N) = 1$, \cref{conj2}(1) asserts
that the honest averages of the $a_p$'s
tend to $0$, similar to the case of modular forms in \cref{prop1}.  However,
there are a couple of obvious differences from the situation of \cref{prop1}.

One evident difference is that the approximate order of this bias ($(\log N)^{-\delta}$) is larger than what we saw for modular forms ($N^{-\frac 12}$).  
From the geometric interpretation of \cref{prop1}, there is no
obvious guess for the order of bias of $a_p(A)$'s for modular abelian varieties
as a simultaneous function of rank, dimension and conductor, 
since the dimension, conductor and
rank are all strongly correlated that context.  However, the notion that
the order of rank bias is larger than any $N^{-\epsilon}$ when we restrict to 
$A$ of bounded dimension, e.g., for elliptic curves, at least seems compatible
with \cref{prop1}.

Another difference is that we are measuring rank bias for elliptic curves,
but root number bias for modular forms.  By the minimalist conjecture, it is natural
to expect that if we look at averages of $a_p(E)$'s where $E$ has root number
$+1$ (resp.\ $-1$), this should behave the same as the rank 0 (resp.\ rank 1) averages.
However, \cref{conj2}(2) says that the $a_p(E)$'s tend to be positive for both rank 0 and
rank 1, rather than matching the sign of the root number!

It could happen that, say, the contribution from the rank 3 and rank 5 elliptic curves is significant enough to make the (suitably weighted averages of) 
$a_p(E)$'s tend to be negative for root number $-1$.
On the other hand, it also seems theoretically plausible
that for either root number $+1$ or $-1$ (and thus for all ranks combined), 
the $a_p(E)$'s tend to be positive,
and that the geometric reason for the root number bias in signs for modular forms 
is due to large rank abelian varieties.  Exploratory calculations did not exhibit
a positive bias in (weighted averages of) $a_p(E)$'s with fixed root 
number.  However, it is hard to draw clear conclusions from these calculations
because the numerical convergence of ranks 
to the minimalist conjecture is very slow, i.e., in the range we are able
to compute, there is still a very large proportion of rank $\ge 2$ curves (see below).

We do remark that if we count integral Weierstrass equations of elliptic curves 
ordered by height (which is almost the same as our family $\Eht$), 
then the reductions mod $p$ are evenly distributed.  By a result of Birch \cite{birch},
this means the unweighted averages of $a_p(E)$'s over all ranks for this family 
tends to 0.  This is compatible with \cref{conj2}(1), and it does not seem to 
preclude the possibility that suitably weighted averages of $a_p(E)$'s over all
ranks may be positive.

\subsection{Evidence and meta-analysis}

Our evidence for these conjectures is purely computational, and is presented 
in \cref{sec3}.

For $\Epr$ and $\Eall$,
we estimate weighted averages using 
the Stein--Watkins databases \cite{stein-watkins} consisting of over 11 milion
isogeny classes of prime conductor $N < 10^{10}$ and over 115 million isogeny
classes of arbitrary conductors $N \le 10^8$.  The Stein--Watkins databases do not
 catalogue all isogeny classes in these conductor ranges, but at least the
Stein--Watkins prime conductor database appears to be nearly complete:
\cite{BGR} estimates it contains over 99.8\% of curves with prime conductor $N < 10^{10}$.
(In fact \cite{BGR} computed a much larger database of prime conductor elliptic curves,
but that database does not include rank calculations which we require.)
For $\Eht$, we compute weighted averages using the height database 
from \cite{height}, which contains all of 
the over 238 million curves with naive height $H \le 2.7 \cdot 10^{10}$.

In fact, the reason for formulating our conjectures for the  three specific
families $\Epr$, $\Eall$ and $\Eht$ is that they correspond to these existing
extensive databases of curves that include conjectural ranks.  
The Stein--Watkins databases include numerically computed analytic ranks.
The ranks computed in the height database in general assume several standard 
conjectures, but are unconditional over 80\% of the time.

For these 3 databases, we compute weighted averages of $a_p(E)$'s with 
$|E| < X$ and fixed rank $r \le 5$ for both a variety of weight functions $\phi$,
and a variety of primes $p \le 300$.  Note that there are not enough curves in these
databases to get meaningful statistics for $r \ge 6$.
For a fixed weight function $\phi$ and rank $r$, we found the general 
behavior to be more-or-less similar for each of the 3 databases and 
for any choice of $p$.  However we will point out a couple of apparent
exceptions to this trend in \cref{sec3}.

When $\phi(N) = 1$, the weighted average
graphs quite quickly tend to zero as asserted in \cref{conj2}(1), 
and do not appear to cross each other for different ranks beyond very small $X$,
as asserted in \cref{conj1}.
When $\phi(N) \gg \sqrt{N}$ or larger, the weighted averages also clearly tend to
$\pm \infty$.  More generally, for $\phi(N) = N^\delta$, the weighted average graphs
appear to grow like $C X^\delta (1 + O(X^\epsilon))$ for some constant nonzero $C$.
For $\phi(N) = \log(N)$, some graphs appear as though they may 
have a finite nonzero asymptote, and some as though they may be slowly
increasing or decreasing, depending on both $p$ and $r$.  
For $\phi(N) = \log \log N$ most graphs appear to go to 0,
and for $\phi(N) = (\log N)^2$ most graphs appear to go to $\pm \infty$ slowly. 
In summary, the data support \cref{conj1,conj2} quite well, possibly with
$\delta = 1$ in \cref{conj2}.  It may be that
in order for $\calA_r(p, X; \phi)$ to tend to a nonzero constant one needs
to make a more complicated choice of $\phi(N)$, which potentially 
depends on $r$.

We expect these conjectures are fairly robust with respect to the choice of family.
E.g., the conjectures should be unchanged if one looks at isomorphism rather than isogeny classes of elliptic curves ordered by conductor (the Stein--Watkins databases
also include isomorphism classes), or if one restricts to squarefree conductors.
We have also computed some weighted averages in the family $\Eht$ where one weights
by $\phi(H_E)$ rather than $\phi(N_E)$, and the general behavior appears similar.
For brevity, we have not included details of weighting by height.

It is well known that one needs to compute very far out to get 
convincing emperical evidence for the minimalist conjecture, i.e., that the average
rank of elliptic curves tends to 0.5.  Indeed, the average rank is numerically increasing
in the Stein--Watkins database for general conductors---see \cite{BMSW}.
However, in the Stein--Watkins prime conductor database, we see that the average rank 
per isogeny class quickly goes up to just over 0.98, and then gradually decreases to approximately 0.96544.  Moreover, in \cite{height}, the authors find that the average rank increases to about 0.908 around height $6 \cdot 10^8$, and then decreases to around 0.901
by height $2.7 \cdot 10^{10}$.

Given this, it is natural to wonder how much one can trust that our calculations
are representative of asymptotic behavior.  First, since we are separating by rank,
there is no direct effect of the slow convergence to the minimalist conjecture on our
data (except that it means we have many curves of rank $\ge 2$ in our databases,
which is actually helpful for our experiments).  
It is of course possible that some of our graphs which appear to have a nonzero (or infinite)
limit, which would signify a persistent bias, eventually tend to 0, or vice versa.
However, we find the asymptotics of the graphs quite compelling up to a factor of
order $(1+O(X^\epsilon))$.

We also note that many statistics besides average ranks converge to expectations 
rather quickly.  For instance, in the horizontal direction,
the $a_p$'s tend to the Sato--Tate distributions quite quickly.
In the vertical direction, numerical convergence to the parity principle---the 
notion that half of all curves should have root number $+1$ and half have $-1$---is 
also quite fast (e.g., see \cite{BMSW}).

Consequently we find the computational evidence very convincing of the existence of
a rank bias on the order of $O(N^{-\eps})$, 
even if it is difficult to tease out the exact order of bias from our calculations.

\section{Traces of Hecke operators} \label{sec2}

Here we exhibit a bias in the traces of Hecke operators $T_n$ on spaces of newforms 
with fixed root numbers.  Note that in \cite[Section 2]{me:ref-dim},
explicit dimension formulas for these spaces were proven, and a strict bias 
towards root number $+1$ was exhibited.  That may be viewed as the
$n=1$ analogue of what we do here.

First we set our notation.  
Denote by $\omega(N)$ the number of prime divisors of $N$, by $\sigma_1(n)$
the sum-of-divisors function, and by
$\delta_{i,j}$ the Kronecker $\delta$ function.
Let $H(n)$ be the Hurwitz class number, i.e., the number of
$\mathrm{SL}_2(\Z)$-equivalence classes of positive definite integral
binary quadratic forms $Q$ of discriminant $-n$ weighted by 
$\# \mathrm{Aut}(Q)/2$.
Throughout, multiple occurrences of $\pm$ and/or $\mp$ within a single statement
are to be interpreted as cases dependent on the first occurrence.

Let $S_k(N)$ denote the space of holomorphic even weight $k$ cusp forms for 
$\Gamma_0(N)$, and
$S^\new_k(N)$ the subspace spanned by newforms.  For $S = S_k(N)$ or 
$S = S^\new_k(N)$, denote by $S^\pm$ the subspace of $S$ spanned by
eigenforms with root number $\pm 1$.  Let $W_N = \prod_{p | N} W_p$, where
$W_p$ denotes the $p$-th Atkin--Lehner operator on $S_k(N)$.
For a subspace $S$ of $S_k(N)$ and an operator $T$ on $S_k(N)$ which leaves
$S$ invariant, denote by $\tr_S T$ the trace of the restriction of $T$ to $S$.

\begin{lem} Suppose $N$ is squarefree, and $(n,N) = 1$.  Then
$\tr_{S_k^\new(N)} T_n W_N = \tr_{S_k(N)} T_n W_N$.
\end{lem}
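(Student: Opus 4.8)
The plan is to reduce the newform trace to the full-space trace by accounting for the oldform contributions, and to show those contributions vanish because of the twist by $W_N$. First I would recall the decomposition of $S_k(N)$ into old and new parts. Since $N$ is squarefree, $S_k(N) = \bigoplus_{M \mid N} \bigoplus_{d \mid (N/M)} \iota_d\bigl(S_k^\new(M)\bigr)$, where $\iota_d$ denotes the degeneracy map $f(z) \mapsto f(dz)$; equivalently, for each divisor $M \mid N$ the newforms of level $M$ contribute a space of dimension $2^{\omega(N/M)}$ times $\dim S_k^\new(M)$, spanned by all $\iota_d f$ with $d \mid (N/M)$. The statement $\tr_{S_k^\new(N)} T_n W_N = \tr_{S_k(N)} T_n W_N$ is then equivalent to the assertion that the trace of $T_n W_N$ on each proper-level old piece (that is, over all $M \mid N$ with $M \ne N$) is zero.

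Next I would fix a newform $g$ of some exact level $M \mid N$ with $M \ne N$, and analyze the action of $T_n W_N$ on the $2^{\omega(N/N')}$-dimensional oldspace it generates inside $S_k(N)$, where I set $N' = N/M$ so $N'>1$. The key structural fact is that $W_N = W_M \cdot W_{N'}$ factors according to the coprime factorization $N = M \cdot N'$ (here using squarefreeness so the primes are distinct), that $W_M$ acts on $g$ by the root-number scalar $\pm 1$, and that $W_{N'}$ permutes the degeneracy-images $\iota_d g$ among themselves. Since $(n,N)=1$, the Hecke operator $T_n$ commutes with all the $W_p$ for $p \mid N$ and acts on the oldspace by the scalar $a_n(g)$ on each $\iota_d g$ (the degeneracy maps are Hecke-equivariant away from the level). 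Thus on this oldspace $T_n W_N$ acts as $a_n(g)$ times a scalar times the permutation-type operator coming from $W_{N'}$, and the trace reduces to computing the trace of that operator built from the $W_p$, $p \mid N'$.

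The crux is therefore to show that the operator induced by $W_{N'}$ on the span of $\{\iota_d g : d \mid N'\}$ is traceless. For a single prime $\ell \mid N'$, the Atkin--Lehner operator $W_\ell$ interchanges the two images $g(z)$ and $g(\ell z)$ (up to a nonzero scalar governed by the eigenvalue $a_\ell(g)$ and the weight normalization), so its matrix in this two-dimensional block is off-diagonal, hence has trace zero; more precisely $W_\ell$ acts via a $2\times 2$ matrix with zero diagonal on $\{\iota_1 g, \iota_\ell g\}$. Because $W_{N'} = \prod_{\ell \mid N'} W_\ell$ and the full oldspace is the tensor product of these two-dimensional blocks over the primes $\ell \mid N'$, the operator $W_{N'}$ is a tensor product of trace-zero operators, and the trace of a tensor product is the product of the traces, giving $0$. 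Multiplying by the scalars $a_n(g)$ and the $W_M$-eigenvalue preserves tracelessness, so every proper-level oldform block contributes $0$ to $\tr_{S_k(N)} T_n W_N$.

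I expect the main obstacle to be pinning down the precise normalization of the Atkin--Lehner action on the degeneracy images so as to confirm the diagonal entries genuinely vanish. The tracelessness ultimately rests on $W_\ell$ having no fixed directions among the oldforms at $\ell$ — that it swaps $\iota_1 g \leftrightarrow \iota_\ell g$ rather than acting diagonally — which is a standard but normalization-sensitive computation in Atkin--Lehner--Li theory for squarefree level. Once the off-diagonal (hence trace-zero) nature of each $W_\ell$-block is established, the tensor-product argument and the reduction to new part are formal, so the real content is isolated in this single-prime local calculation.
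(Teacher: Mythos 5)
Your proof is correct and is essentially the same argument as the paper's: the paper also reduces to the oldspace block generated by each lower-level newform, notes that $T_n$ acts there by a scalar since $(n,N)=1$, and that $W_N$ acts as a fixed-point-free involution on a natural basis of that block (the adelic version of your tensor product of off-diagonal $2\times 2$ blocks), hence $T_n W_N$ is traceless on the old part. The only difference is language — the paper phrases the block and the basis representation-theoretically via newvectors $W_Q\phi$, while you use classical degeneracy maps and Atkin--Lehner--Li theory — and the single-prime computation you flag as the crux (that $W_\ell$ swaps $\iota_1 g$ and $\iota_\ell g$ with zero diagonal) is indeed the standard fact both arguments rest on.
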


\begin{proof} This is a special case of \cite[Proposition 2]{yamauchi}.  Here is
an alternative argument in terms of representations, which we find more enlightening.

Suppose $f \in S^\new_k(M)$ is a newform, where $M$ is a proper divisor of $N$.
Let $\pi = \bigotimes \pi_v$ be the associated cuspidal representation of $\GL(2)$,
and $\phi = \bigotimes \phi_v$ be a newvector associated to $f$.  Then
the contribution $\pi^{K(N)}$ of $\pi$ to $S_k(N)$ has a basis of the form $W_Q \phi$
where $Q$ ranges over positive divisors of $M$, and $W_Q \phi = \bigotimes \phi_v'$
where $\phi_q' = \pi_q \bmx & 1 \\ q & \emx \phi_q$ if $q | Q$ and $\pi_v' = \pi_v$ if
$v \nmid Q$.  Now observe that when $(n,N) = 1$, $T_n$ acts by a scalar on 
$\pi^{K(N)}$ and $W_N$
acts as an involution on the above basis elements with no fixed points.  Thus
$T_n W_N$ has trace zero on $\pi^{K(N)}$, and consequently on the whole old space of
$S_k(N)$.
\end{proof}

\begin{prop} Suppose $N$ is squarefree, $n > 1$ is nonsquare, $(n,N) = 1$ and $N > 4n$.  
Then
\[ \left| \tr_{S_k^\new(N)^\pm} T_n \mp \frac 14 (-n)^{\frac{k-2}2} H(4nN) \right| < 
\left( 2^{\omega(N)} (4n)^{\frac k2} + \delta_{k,2} \right)\sigma_1(n). \]
\end{prop}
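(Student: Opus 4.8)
The plan is to realize the root-number projection as an Atkin--Lehner projection, reduce the off-diagonal piece to a full-space trace via the preceding Lemma, and then extract the main term from the Eichler--Selberg trace formula for the twisted operator $T_n W_N$. First I would record the relation between root numbers and Atkin--Lehner signs: for a newform $f \in S_k^\new(N)$ of squarefree level $N$ the root number is $w_f = (-1)^{k/2}\lambda_N(f)$, where $W_N f = \lambda_N(f) f$, so the projection onto $S_k^\new(N)^\pm$ is $P_\pm = \tfrac12\left(I \pm (-1)^{k/2} W_N\right)$. Taking traces gives
\[ \tr_{S_k^\new(N)^\pm} T_n = \tfrac12 \tr_{S_k^\new(N)} T_n \pm \tfrac{(-1)^{k/2}}{2}\,\tr_{S_k^\new(N)} T_n W_N, \]
and the preceding Lemma replaces $\tr_{S_k^\new(N)} T_n W_N$ by $\tr_{S_k(N)} T_n W_N$. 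The first summand will be absorbed into the error, while the second produces the main term $\pm\tfrac14(-n)^{\frac{k-2}2}H(4nN)$.

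Next I would apply the Eichler--Selberg trace formula to the twisted operator $T_n W_N$ on the full space $S_k(N)$. Since $W_N$ is (a normalization of) the Fricke involution, this is the trace of a double coset of determinant $nN$, and the formula has the usual shape: an identity term, an elliptic term $-\tfrac12\sum_{t} P_k(t,nN)\,(\text{weighted count})$, a hyperbolic term, and, when $k=2$, an extra correction. The crucial simplification is that the Atkin--Lehner twist forces the local condition at each $p \mid N$ to require $p \mid t$, hence $N \mid t$ since $N$ is squarefree; combined with the elliptic inequality $t^2 < 4nN$ and the hypothesis $N > 4n$ (so $4nN < N^2$ and $|t| < N$), the only surviving elliptic trace is $t = 0$. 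Evaluating $P_k(0,nN) = (-1)^{k/2}(nN)^{\frac{k-2}2}$, absorbing the factor $N^{\frac{k-2}2}$ against the $N^{1-\frac k2}$ normalization of $W_N$, and identifying the weighted count of $t=0$ classes with the Hurwitz class number $H(4nN)$, one obtains, after tracking signs against the prefactor $\pm\tfrac{(-1)^{k/2}}2$, exactly the asserted main term.

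It remains to bound the error. The identity terms vanish because neither $nN$ nor $n$ is a perfect square ($n$ is nonsquare, and $N > 1$ is squarefree and prime to $n$); this is precisely where the hypothesis that $n$ is nonsquare is used. The hyperbolic term and the $k=2$ correction of the twisted trace are controlled by the standard estimates---divisor sums $\sum_{d\mid nN}\min(d,nN/d)^{k-1}$ and the constant $\delta_{k,2}$ term---contributing $\ll \sigma_1(n)$ and $\ll \delta_{k,2}\sigma_1(n)$. For the remaining summand $\tfrac12\tr_{S_k^\new(N)} T_n$ I would invert the newform/oldform relation: for squarefree $N$, $\tr_{S_k^\new(N)} T_n = \sum_{M\mid N} c(N/M)\,\tr_{S_k(M)} T_n$ is a sum of at most $2^{\omega(N)}$ full-space traces, and each $\tr_{S_k(M)} T_n$ (with $(n,M)=1$) is bounded by its Eichler--Selberg expansion, with elliptic part $\ll (4n)^{\frac k2}$, hyperbolic part $\ll \sigma_1(n)$, and a $\delta_{k,2}$ contribution. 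Collecting everything yields the stated bound $\left(2^{\omega(N)}(4n)^{\frac k2} + \delta_{k,2}\right)\sigma_1(n)$.

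The main obstacle is the twisted elliptic term: one needs the Eichler--Selberg formula for $T_n W_N$ in a form explicit enough both to see that the local factors at $p \mid N$ impose $p \mid t$ and to confirm that the weighted count of $t=0$ classes is precisely $\tfrac14$ times $H(4nN)$, with every normalization and sign tracked. The collapse to $t=0$ via $N > 4n$ is clean, but matching the constant and sign of the surviving term against $\pm\tfrac14(-n)^{\frac{k-2}2}$ and making the error estimates uniform in $N$ is where the genuine bookkeeping lies.
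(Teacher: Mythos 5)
Your outline follows essentially the same route as the paper: the identity
$\tr_{S_k^\new(N)^\pm} T_n = \tfrac12\bigl(\tr_{S_k^\new(N)} T_n \pm (-1)^{k/2}\tr_{S_k^\new(N)} T_n W_N\bigr)$, the preceding Lemma to replace the twisted newspace trace by the full-space trace, the collapse of that trace to the single term $-\tfrac12 n^{\frac{k-2}2}H(4nN)+\delta_{k,2}\sigma_1(n)$ when $N>4n$ (the paper simply quotes the Skoruppa--Zagier correction of Yamauchi's formula, in which the condition $N\mid s$ together with $s^2\le 4nN<N^2$ forces $s=0$, rather than re-deriving it from Eichler--Selberg), and absorbing $\tfrac12\tr_{S_k^\new(N)}T_n$ into the error. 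So the architecture is right and the main term comes out the same way.

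The one step that does not deliver the statement as written is your error bound. You propose to control $\tr_{S_k^\new(N)}T_n$ by inverting the old/new decomposition into ``at most $2^{\omega(N)}$ full-space traces,'' but for squarefree $N$ the inversion coefficients are $(\mu*\mu)(N/M)=(-2)^{\omega(N/M)}$, whose absolute values already sum to $3^{\omega(N)}$, and each full-space trace $\tr_{S_k(M)}T_n$ carries its own local weights at the primes $p\mid M$ in its elliptic term; tracked honestly, this route yields a constant on the order of $3^{\omega(N)}$ or $4^{\omega(N)}$ in place of the asserted $2^{\omega(N)}$. The paper instead applies Murty--Sinha's explicit formula for the newspace trace directly, where the local factors satisfy $B_2(N)_f\le f\,2^{\omega(N)}$, giving $|\tr_{S_k^\new(N)}T_n| < \bigl(2^{\omega(N)+1}(4n)^{k/2}+\delta_{k,2}\bigr)\sigma_1(n)$ and hence, after halving, exactly the stated constant. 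For the downstream corollary any $C^{\omega(N)}=O(N^\epsilon)$ would do, so your argument proves a serviceable variant, but to prove the proposition's inequality you should bound the newspace trace by its own explicit formula rather than by inversion. (A minor point you already flagged as bookkeeping: $p_k(0,m)=(-m)^{\frac{k-2}2}=-(-1)^{k/2}m^{\frac{k-2}2}$, not $(-1)^{k/2}m^{\frac{k-2}2}$.)
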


We remark the restriction to $n$ being nonsquare here is purely for simplicity.
Otherwise there is an extra term in the explicit formula for $\tr_{S_k^\new(N)} T_n$
arising in the proof below.

\begin{proof} Let $n$ be a positive integer coprime to $N$.
Since the root number of a newform in $S_k(N)$ is $(-1)^{\frac k2}$ times $W_N$,
we have
\begin{equation} \label{eq:tr-id}
 \tr_{S_k^\new(N)^\pm} T_n = \frac 12 
\left( \tr_{S_k^\new(N)} T_n  \pm (-1)^{\frac k2} \tr_{S_k^\new(N)} T_n W_N \right).
\end{equation}

Yamauchi \cite{yamauchi} proved a formula for 
$\tr_{S_k(N)} T_n W_N$ for general $N$, though that work contained clerical errors.
A corrected form was given by Skoruppa and Zagier \cite[(2.7)]{skoruppa-zagier},
which for squarefree $N$ simplifies to:
\[ \tr_{S_k(N)} T_n W_N = - \frac 12 \sum_{s^2 \le 4nN, N | s}
p_{k}(s/\sqrt N, n) H(4nN - s^2) + \delta_{k,2} \sigma_1(n). \]
Here, when $b^2-4c \ne 0$, 
$p_k(b,c) =(\rho_1^{k-1} - \rho_2^{k-1})/(\rho_1 - \rho_2)$ where $\rho_1, \rho_2$
are the roots of $x^2 - bx + c$.
If $N > 4n$, we only get the $s=0$ term in the first sum:
\begin{equation} \label{eq:SZ}
 \tr_{S_k(N)} T_n W_N = - \frac 12 n^{\frac{k-2}2} H(4nN) + \delta_{k,2} \sigma_1(n).
\end{equation}
By the above lemma, we now have an explicit formula for 
$\tr_{S_k^\new(N)} T_n W_N$.

An explicit formula for $\tr_{S_k^\new(N)} T_n$ is given in 
\cite[Theorem 5]{murty-sinha} for arbitrary $N$.  When $N$ is squarefree and
$n > 1$ is nonsquare, this gives
\[  \tr_{S_k^\new(N)} T_n  = - \frac 12 \sum_{t^2 < 4n}
p_k(t,n) \sum_{f^2 | (4n-t^2)} h_w(\frac{t^2-4n}{f^2}) B_2(N)_f + 
 \delta_{k,2} \mu(N) \sigma_1(n). \]
Here $t \in \Z$, $f \in \mathbb N$, $h_w(D)$ is the class number of the
imaginary quadratic order $\mathcal O(D)$ of discriminant $D$ times 
$[\mathcal O(D)^\times : \Z^\times ]$ (interpreted as $0$ if $D$ is not a discriminant), 
and $B_2(N)_f = \prod_{p | N} B_2(p)_f$
where $B_2(p)_f$ is $p-1$ if $p | f$ and ${t^2 - 4n \leg p} - 1$ otherwise.

We note that $\sum_{f^2 | (4n-t^2)} h_w(\frac{t^2-4n}{f^2}) = H(4n-t^2)$ and 
and $\sum_{t^2 < 4n} H(4n-t^2) < 2\sigma_1(n) - 1$
(e.g., see \cite[Proposition 12]{murty-sinha}).
Since also $|p_k(t,n)| < 2 (4n)^{\frac {k-1}2}$ and
$B_2(N)_f \le f 2^{\omega(N)}$, we have
\begin{equation} \label{eq:MS}
 \left| \tr_{S_k^\new(N)} T_n \right| < \left(2^{\omega(N)+1}(4n)^{\frac k2} + \delta_{k,2}\right) \sigma_1(n).
\end{equation}
(See also \cite[Proposition 14]{murty-sinha} for a similar bound.)

Now combine \eqref{eq:tr-id} with \eqref{eq:SZ} and \eqref{eq:MS}.
\end{proof}

In fact one can use the formulas in \cite{yamauchi}, \cite{skoruppa-zagier} and 
\cite{murty-sinha}
to give explicit formulas for $\tr_{S_k^\new(N)^\pm} T_n$ without assuming
$N$ is squarefree (but still coprime to $n$) or larger than $4n$.  However, such formulas 
will involve alternating sums of class numbers, which makes it more difficult to
generalize the following corollary to $N$ non-squarefree.

\begin{cor} \label{cor:mf-bias}
 Fix $k \ge 2$ even, $n > 1$ squarefree, and $\epsilon > 0$.
As $N \to \infty$ along a sequence of
squarefree numbers coprime to $N$, we have
\[ N^{\frac 12 - \epsilon} \ll  \pm (-1)^{\frac{k-2}2}  \tr_{S_k^\new(N)^{\pm}} T_n \ll N^{\frac 12} \log N. \]
\end{cor}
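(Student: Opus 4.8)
The plan is to read the asymptotics off the Proposition directly, treating $\tfrac14(-n)^{\frac{k-2}2}H(4nN)$ as a main term and everything else as error. First I would multiply the inequality in the Proposition by $\pm(-1)^{\frac{k-2}2}$. Since $(-n)^{\frac{k-2}2}=(-1)^{\frac{k-2}2}n^{\frac{k-2}2}$, both the sign choice $\pm$ and the factor $(-1)^{\frac{k-2}2}$ square away, so this turns the main term into the \emph{positive} quantity $\tfrac14 n^{\frac{k-2}2}H(4nN)$ while leaving an error still bounded by $\bigl(2^{\omega(N)}(4n)^{\frac k2}+\delta_{k,2}\bigr)\sigma_1(n)$. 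With $n$ and $k$ fixed, the factor $(4n)^{\frac k2}\sigma_1(n)$ is a constant, and the standard estimate $\omega(N)\ll \log N/\log\log N$ gives $2^{\omega(N)}\ll_\epsilon N^\epsilon$; hence the error is $\ll_{n,k,\epsilon}N^\epsilon$. The problem thus reduces to pinning down the size of $H(4nN)$ and checking that the main term dominates the error.

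Next I would analyze $H(4nN)$ through Dirichlet's class number formula, and here the squarefree hypothesis on $n$ (stronger than the ``nonsquare'' assumption of the Proposition) does the essential work. Since $n$ and $N$ are squarefree and coprime, $nN$ is squarefree, so $-4nN$ is a discriminant whose fundamental discriminant $d_K$ satisfies $|d_K|\asymp nN$ with conductor at most $2$. Consequently the sum over non-maximal orders defining $H(4nN)$ has only boundedly many terms, each comparable to the maximal-order contribution, so $H(4nN)\asymp h_w(d_K)\asymp |d_K|^{\frac12}L(1,\chi_{d_K})\asymp N^{\frac12}L(1,\chi_{d_K})$ with absolute implied constants (for fixed $n$). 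Without squarefreeness the conductor of $-4nN$ could grow with $N$ and the order-by-order sum would require more care; this is presumably why the corollary, unlike the Proposition, assumes $n$ squarefree.

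Finally I would insert the classical bounds on $L(1,\chi_{d_K})$: the upper bound $L(1,\chi_{d_K})\ll\log|d_K|\ll\log N$, yielding $H(4nN)\ll N^{\frac12}\log N$, and Siegel's lower bound $L(1,\chi_{d_K})\gg_\epsilon|d_K|^{-\epsilon}$, yielding $H(4nN)\gg_\epsilon N^{\frac12-\epsilon}$. Combined with the first step, the main term $\tfrac14 n^{\frac{k-2}2}H(4nN)$ is $\gg_\epsilon N^{\frac12-\epsilon}$ and so dominates the $O(N^\epsilon)$ error once $\epsilon<\tfrac14$ (the claimed lower bound only weakens as $\epsilon$ grows, so it suffices to treat small $\epsilon$); this gives $\pm(-1)^{\frac{k-2}2}\tr_{S_k^\new(N)^\pm}T_n\gg_\epsilon N^{\frac12-\epsilon}$. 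Since both the main term and the error are $\ll N^{\frac12}\log N$, the same quantity is $\ll N^{\frac12}\log N$, completing both bounds. The main obstacle is the lower bound, which rests entirely on Siegel's ineffective estimate for $L(1,\chi)$; everything else is bookkeeping, the one genuine subtlety being the reduction of the Hurwitz class number to the fundamental class number, which is clean here only because $nN$ is squarefree.
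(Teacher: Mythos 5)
Your proposal is correct and follows essentially the same route as the paper: isolate the $\pm\frac14(-n)^{\frac{k-2}2}H(4nN)$ term from the Proposition, bound the remainder by $O(N^\epsilon)$ via $2^{\omega(N)}\ll N^\epsilon$, use squarefreeness of $nN$ to reduce $H(4nN)$ to an ordinary class number, and conclude with the standard upper bound and Siegel's ineffective lower bound. Your slightly more careful handling of the conductor of $-4nN$ (the paper simply asserts $4nN$ is a fundamental discriminant) and your phrasing of the class number bounds through $L(1,\chi)$ are cosmetic differences only.
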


\begin{proof}
Note that, along a sequence of squarefree $N$, $2^{\omega(N)} = O(N^\epsilon)$;
see for instance the proof of \cite[Proposition 3.10(ii)]{me:qmf-zeroes}.
Consequently, 
\[ \tr_{S_k^\new(N)^{\pm}} T_n = \pm \frac 14 (-n)^{\frac {k-2}2} H(4nN)
+ O (N^\epsilon).\]  
Since $nN$ is squarefree, $4nN$ is a fundamental discriminant
and $H(4nN)$ is the usual class number $h(-4nN)$.  
Now for fundamental discriminants $-D < 0$,
we use the standard upper bound $h(-D) \ll D^{\frac 12} \log D$ and Siegel's (ineffective) lower bound $h(-D) \gg D^{\frac 12 - \epsilon}$.
\end{proof}

Roughly, this says that  $\tr_{S_k^\new(N)^{\pm}} T_p$ grows approximately like 
$\pm \sqrt N$ when $k \equiv 2 \mod 4$, and approximately like $\mp \sqrt N$
when $k \equiv 0 \mod 4$.
In particular, when $k=2$ and $\phi$ is any weight function $\phi$, we have that
$\mathcal A^+(p, X; \phi) > 0$ and $\mathcal A^-(p, X; \phi) < 0$ for $X$ sufficiently
large.  This proves the first part of \cref{prop1}.

Moreover, by \cite[Section 2]{me:ref-dim} we have
 $\dim S_2^\new(N)^{\pm} = \frac{\varphi(N)}{12} + O(N^{\frac 12} \log N)$.
 Here $\varphi$ denotes the Euler totient, not to be confused with a weight function $\phi$.
Now note that, for any $\epsilon > 0$,
we have $X^{2-\epsilon} \ll \sum_{N < X} \varphi(N) \ll  X^2$, where in
the sum $N$ is restricted to positive squarefree integers.  For the lower
bound, we are using that the squarefree integers have positive natural density in 
$\mathbb N$ and satisfy $\varphi(N) \ge \frac N{2^{\omega(N)}}$, together with 
the abovementioned  fact that $2^{\omega(N)} = O(N^\epsilon)$.
Consequently, we see that
\[ \frac{\sum_{N < X} N^{\frac 12 - \epsilon} \phi(N)}{X^2}  \ll |\mathcal A^\pm (p, X; \phi)|
\ll \frac{\sum_{N < X} N^{\frac 12} \log N \phi(N)}{X^{2-\epsilon}} \]
for any weight function $\phi$ and constant $\epsilon > 0$.
(In both sums, $N$ is restricted to positive squarefree $N$.)  This immediately gives
the remainder of \cref{prop1}.

\section{Data} \label{sec3}

Now we present and briefly discuss some data supporting our conjectures.

We computed weighted averages as in \eqref{eq:wt-avg} for the finite subfamilies
$\Epr^{\mathrm{SW}}$, $\Eall^{\mathrm{SW}}$ and $\Eht^{\mathrm{db}}$ of
$\Epr$, $\Eall$ and $\Eht$ which respectively consist of all classes of curves
contained within the Stein--Watkins prime conductor database, Stein--Watkins 
arbitrary conductor
database and the height database from \cite{height}.  
These calculations only approximate the averages in \eqref{eq:wt-avg}
for $X < 10^{10}$, $X \le 10^8$ and $X \le 2.7 \times 10^{10}$ for two reasons
mentioned in \cref{sec1}:
the Stein--Watkins databases are incomplete and many of the ranks in the
 databases are conjectural.

Assuming correctness of the ranks, we computed the weighted averages
$\calA_r^\calE(p, X; \phi)$ for $\calE = \Epr^{\mathrm{SW}}$, $\Eall^{\mathrm{SW}}$ and 
$\Eht^{\mathrm{db}}$ for a wide variety of primes $p \le 300$
and weight functions $\phi$ for $X$ up to the relevant database bound.  
On a single CPU core, running calculations for several
$p$ at a time, the calculations for a given $\phi$ took approximately 75--100 minutes
of real time for $\Epr^{\mathrm{SW}}$, 13--16 hours for $\Eall^{\mathrm{SW}}$,
and 24--28 hours for $\Eht^{\mathrm{db}}$.

For a fixed $r \le 4$ and $\phi$, the behavior of $\calA_r^\calE(p, X; \phi)$ is generally
similar for both different $p$ and $\calE$.  See \cref{fig:Ecomp} for overlaid
graphs with $p=7$, $\phi(N) = \log N$, and various $r$ for each $\calE$. 
These graphs strongly support \cref{conj1}, at least for $\phi(N) = \log N$.
We have examined similar graphs for a variety of weight functions $\phi$,
and these graphs are equally convincing in support of \cref{conj1}.
E.g., see \cref{fig:un-sqrt,fig:x-x2} for the case $\calE = \Epr^{\mathrm{SW}}$,
$p = 7$ with the weights 
$\phi(N) = 1$, $\phi(N) = \sqrt N$, $\phi(N) = N$ and $\phi(N) = N^2$.

\begin{figure}
\begin{minipage}{.333\textwidth}
\resizebox{.9\linewidth}{!}{
\begin{tikzpicture}
\begin{axis}[cycle list name=color list, legend pos = south east, ymin=-60]
\addplot table {data/prime_log_r0_p7.dat};
\addplot table {data/prime_log_r1_p7.dat};
\addplot table {data/prime_log_r2_p7.dat};
\addplot table {data/prime_log_r3_p7.dat};
\addplot table {data/prime_log_r4_p7.dat};
\legend{$r= 0$, $r=1$, $r=2$, $r=3$, $r=4$}
\end{axis}
\end{tikzpicture} 
}
\end{minipage}%
\begin{minipage}{.333\textwidth}
\resizebox{.9\linewidth}{!}{
\begin{tikzpicture}
\begin{axis}[cycle list name=color list, ymin=-60]
\addplot table {data/all_log_r0_p7.dat};
\addplot table {data/all_log_r1_p7.dat};
\addplot table {data/all_log_r2_p7.dat};
\addplot table {data/all_log_r3_p7.dat};
\addplot table {data/all_log_r4_p7.dat};
\end{axis}
\end{tikzpicture} 
}
\end{minipage}%
\begin{minipage}{.333\textwidth}
\resizebox{.9\linewidth}{!}{
\begin{tikzpicture}
\begin{axis}[cycle list name=color list, ymin=-60]
\addplot table {data/ht_log_r0_p7.dat};
\addplot table {data/ht_log_r1_p7.dat};
\addplot table {data/ht_log_r2_p7.dat};
\addplot table {data/ht_log_r3_p7.dat};
\addplot table {data/ht_log_r4_p7.dat};
\end{axis}
\end{tikzpicture} 
}
\end{minipage}
\caption{Log weight with $p=7$ and various ranks for
$\Epr^{\mathrm{SW}}$ (left),  $\Eall^{\mathrm{SW}}$ (middle), and 
$\Eht^{\mathrm{db}}$ (right); all graphs have the same legend}
\label{fig:Ecomp}
\end{figure}

\begin{figure}
\begin{minipage}{.5\textwidth}
\resizebox{.9\linewidth}{!}{
\begin{tikzpicture}
\begin{axis}[cycle list name=color list, legend pos = outer north east, ymin = -4]
\addplot table {data/prime_unwt_r0_p7.dat};
\addplot table {data/prime_unwt_r1_p7.dat};
\addplot table {data/prime_unwt_r2_p7.dat};
\addplot table {data/prime_unwt_r3_p7.dat};
\addplot table {data/prime_unwt_r4_p7.dat};
\legend{$r= 0$, $r=1$, $r=2$, $r=3$, $r=4$}
\end{axis}
\end{tikzpicture} 
}
\end{minipage}%
\begin{minipage}{.5\textwidth}
\resizebox{.9\linewidth}{!}{
\begin{tikzpicture}
\begin{axis}[cycle list name=color list, legend pos = outer north east]
\addplot table {data/prime_sqrt_r0_p7.dat};
\addplot table {data/prime_sqrt_r1_p7.dat};
\addplot table {data/prime_sqrt_r2_p7.dat};
\addplot table {data/prime_sqrt_r3_p7.dat};
\addplot table {data/prime_sqrt_r4_p7.dat};
\legend{$r= 0$, $r=1$, $r=2$, $r=3$, $r=4$}
\end{axis}
\end{tikzpicture} 
}
\end{minipage}
\caption{Constant (left) and $\sqrt{N}$ (right) weights for $p=7$ and 
$\calE = \Epr^{\mathrm{SW}}$}
 \label{fig:un-sqrt}
\end{figure}

\begin{figure}
\begin{minipage}{.5\textwidth}
\resizebox{.9\linewidth}{!}{
\begin{tikzpicture}
\begin{axis}[cycle list name=color list, legend pos = outer north east]
\addplot table {data/prime_x_r0_p7.dat};
\addplot table {data/prime_x_r1_p7.dat};
\addplot table {data/prime_x_r2_p7.dat};
\addplot table {data/prime_x_r3_p7.dat};
\addplot table {data/prime_x_r4_p7.dat};
\legend{$r= 0$, $r=1$, $r=2$, $r=3$, $r=4$}
\end{axis}
\end{tikzpicture} 
}
\end{minipage}%
\begin{minipage}{.5\textwidth}
\resizebox{.9\linewidth}{!}{
\begin{tikzpicture}
\begin{axis}[cycle list name=color list, legend pos = outer north east]
\addplot table {data/prime_x2_r0_p7.dat};
\addplot table {data/prime_x2_r1_p7.dat};
\addplot table {data/prime_x2_r2_p7.dat};
\addplot table {data/prime_x2_r3_p7.dat};
\addplot table {data/prime_x2_r4_p7.dat};
\legend{$r= 0$, $r=1$, $r=2$, $r=3$, $r=4$}
\end{axis}
\end{tikzpicture} 
}
\end{minipage}
\caption{$N$ (left) and $N^2$ (right) weights for $p=7$ and 
$\calE = \Epr^{\mathrm{SW}}$}
 \label{fig:x-x2}
\end{figure}

These graphs also support \cref{conj2}, but due to the scale, 
the overlaid graphs in \cref{fig:Ecomp}
hide the precise behavior of the individual graphs.
See \cref{fig:rcomp-prime,fig:rcomp-all,fig:rcomp-ht} for graphs
of individual ranks with $p=7$ and $\phi(N) = \log N$
for the families $\calE = \Epr^{\mathrm{SW}}$, $\Eall^{\mathrm{SW}}$
and $\Eht^{\mathrm{db}}$, respectively.

\begin{figure}
\begin{minipage}{.333\textwidth}
\resizebox{.9\linewidth}{!}{
\begin{tikzpicture}
\begin{axis}[cycle list name=color list, legend pos = south east, ymin = 5]
\addplot table {data/prime_log_r0_p7.dat};
\legend{$r = 0$}
\end{axis}
\end{tikzpicture} 
}
\end{minipage}%
\begin{minipage}{.333\textwidth}
\resizebox{.9\linewidth}{!}{
\begin{tikzpicture}
\begin{axis}[cycle list name=color list, legend pos = south east, ymin = 1]
\addplot table {data/prime_log_r1_p7.dat};
\legend{$r = 1$}
\end{axis}
\end{tikzpicture} 
}
\end{minipage}%
\begin{minipage}{.333\textwidth}
\resizebox{.9\linewidth}{!}{
\begin{tikzpicture}
\begin{axis}[cycle list name=color list, legend pos = south east, ymin = -13]
\addplot table {data/prime_log_r2_p7.dat};
\legend{$r = 2$}
\end{axis}
\end{tikzpicture} 
}
\end{minipage}%

\begin{minipage}{.333\textwidth}
\resizebox{.9\linewidth}{!}{
\begin{tikzpicture}
\begin{axis}[cycle list name=color list, legend pos = south east, ymin = -30]
\addplot table {data/prime_log_r3_p7.dat};
\legend{$r = 3$}
\end{axis}
\end{tikzpicture} 
}
\end{minipage}%
\begin{minipage}{.333\textwidth}
\resizebox{.9\linewidth}{!}{
\begin{tikzpicture}
\begin{axis}[cycle list name=color list, legend pos = south east, ymin = -50]
\addplot table {data/prime_log_r4_p7.dat};
\legend{$r = 4$}
\end{axis}
\end{tikzpicture} 
}
\end{minipage}%
\begin{minipage}{.333\textwidth}
\resizebox{.9\linewidth}{!}{
\begin{tikzpicture}
\begin{axis}[cycle list name=color list, legend pos = south east]
\addplot table {data/prime_log_r5_p7.dat};
\legend{$r = 5$}
\end{axis}
\end{tikzpicture} 
}
\end{minipage}
\caption{Log weight for $\Epr^{\mathrm{SW}}$ with $p=7$}
\label{fig:rcomp-prime}
\end{figure}

\begin{figure}
\begin{minipage}{.333\textwidth}
\resizebox{.9\linewidth}{!}{
\begin{tikzpicture}
\begin{axis}[cycle list name=color list, legend pos = south east, ymin = 5]
\addplot table {data/all_log_r0_p7.dat};
\legend{$r = 0$}
\end{axis}
\end{tikzpicture} 
}
\end{minipage}%
\begin{minipage}{.333\textwidth}
\resizebox{.9\linewidth}{!}{
\begin{tikzpicture}
\begin{axis}[cycle list name=color list, legend pos = south east, ymin = 0]
\addplot table {data/all_log_r1_p7.dat};
\legend{$r = 1$}
\end{axis}
\end{tikzpicture} 
}
\end{minipage}%
\begin{minipage}{.333\textwidth}
\resizebox{.9\linewidth}{!}{
\begin{tikzpicture}
\begin{axis}[cycle list name=color list, legend pos = south east, ymin = -20]
\addplot table {data/all_log_r2_p7.dat};
\legend{$r = 2$}
\end{axis}
\end{tikzpicture} 
}
\end{minipage}%

\begin{minipage}{.333\textwidth}
\resizebox{.9\linewidth}{!}{
\begin{tikzpicture}
\begin{axis}[cycle list name=color list, legend pos = south east, ymin = -35]
\addplot table {data/all_log_r3_p7.dat};
\legend{$r = 3$}
\end{axis}
\end{tikzpicture} 
}
\end{minipage}%
\begin{minipage}{.333\textwidth}
\resizebox{.9\linewidth}{!}{
\begin{tikzpicture}
\begin{axis}[cycle list name=color list, legend pos = south east, ymin = -60]
\addplot table {data/all_log_r4_p7.dat};
\legend{$r = 4$}
\end{axis}
\end{tikzpicture} 
}
\end{minipage}%
\begin{minipage}{.333\textwidth}
{}
\end{minipage}
\caption{Log weight for $\Eall^{\mathrm{SW}}$ with $p=7$}
\label{fig:rcomp-all}
\end{figure}

\begin{figure}
\begin{minipage}{.333\textwidth}
\resizebox{.9\linewidth}{!}{
\begin{tikzpicture}
\begin{axis}[cycle list name=color list, legend pos = south east, ymin = 5, ymax = 7]
\addplot table {data/ht_log_r0_p7.dat};
\legend{$r = 0$}
\end{axis}
\end{tikzpicture} 
}
\end{minipage}%
\begin{minipage}{.333\textwidth}
\resizebox{.9\linewidth}{!}{
\begin{tikzpicture}
\begin{axis}[cycle list name=color list, legend pos = south east, ymin =0, ymax = 3]
\addplot table {data/ht_log_r1_p7.dat};
\legend{$r = 1$}
\end{axis}
\end{tikzpicture} 
}
\end{minipage}%
\begin{minipage}{.333\textwidth}
\resizebox{.9\linewidth}{!}{
\begin{tikzpicture}
\begin{axis}[cycle list name=color list, legend pos = south east, ymin=-14, ymax = -10]
\addplot table {data/ht_log_r2_p7.dat};
\legend{$r = 2$}
\end{axis}
\end{tikzpicture} 
}
\end{minipage}%

\begin{minipage}{.333\textwidth}
\resizebox{.9\linewidth}{!}{
\begin{tikzpicture}
\begin{axis}[cycle list name=color list, legend pos = south east, ymin = -30, ymax = -23]
\addplot table {data/ht_log_r3_p7.dat};
\legend{$r = 3$}
\end{axis}
\end{tikzpicture} 
}
\end{minipage}%
\begin{minipage}{.333\textwidth}
\resizebox{.9\linewidth}{!}{
\begin{tikzpicture}
\begin{axis}[cycle list name=color list, legend pos = south east, ymin = -50, ymax = -40]
\addplot table {data/ht_log_r4_p7.dat};
\legend{$r = 4$}
\end{axis}
\end{tikzpicture} 
}
\end{minipage}%
\begin{minipage}{.333\textwidth}
\resizebox{.9\linewidth}{!}{
\begin{tikzpicture}
\begin{axis}[cycle list name=color list, legend pos = south east, ymin = -80, ymax = -60]
\addplot table {data/ht_log_r5_p7.dat};
\legend{$r = 5$}
\end{axis}
\end{tikzpicture} 
}
\end{minipage}
\caption{Log weight for $\Eht^{\mathrm{d}}$ with $p=7$}
\label{fig:rcomp-ht}
\end{figure}

Note that for the prime conductor and height databases with log weighting, 
for each $r$ the graph appears to be eventually relatively flat or possibly
tending very slowly away from 0.  The case of $\calE = \Eall^{\mathrm{SW}}$
is different however: except for $r=1$ where the graph appears to be slowing
increasing away from 0, but for all other ranks the graphs appear to bend towards
0.  While it may be that the behavior is actually different for the family
$\Eall$, we suspect this difference is more likely due to factors such as
the Stein--Watkins all conductor database being rather incomplete (and perhaps 
giving a biased sample of $\Eall$) and only going up to conductor $10^8$.
However, even just restricting to the prime conductor and height databases,
we see that for some ranks the log weighted averages appear to flatten out and
for some ranks (notably $r=2$ and $r=3$) they appear to be slowly tending away
from 0.  (We have also included $r=5$ graphs for the prime conductor and height 
databases, but there is perhaps not enough data to read too much into these graphs.)

To try to get a more precise sense of the order of bias, in
\cref{fig:r0comp,fig:r1comp,fig:r2comp} 
we present several graphs for ranks 0--2
for each of our 3 databases.  (Analogous graphs for ranks 3 and 4 look
similar to the rank 2 graphs, and we omit them.)
Namely, we graph $\calA^\calE(p, X; \phi)$
for $p = 7$ and $p = 11$ with the 3 weight functions $\phi(N) = \log \log N$,
$\phi(N) = \log N$ and $\phi(N) = (\log N)^2$.
In all cases except for
$\calE = \Eall^{\mathrm{SW}}$ with $r=1$ and $\phi(N) = \log \log N$, 
we see that the graphs are
tending toward 0 for the log log weight, and tending toward $\pm \infty$
for the $\log^2$ weight.

\begin{figure}
\begin{minipage}{.333\textwidth}
\resizebox{.9\linewidth}{!}{
\begin{tikzpicture}
\begin{axis}[cycle list name=color list, legend pos = south east, ymin=0.5,ymax=1.5]
\addplot table {data/prime_loglog_r0_p7.dat};
\addplot table {data/prime_loglog_r0_p11.dat};
\legend{$p = 7$, $p = 11$}
\end{axis}
\end{tikzpicture} 
}
\end{minipage}%
\begin{minipage}{.333\textwidth}
\resizebox{.9\linewidth}{!}{
\begin{tikzpicture}
\begin{axis}[cycle list name=color list, legend pos = south east, ymin=0.5,ymax=1.5]
\addplot table {data/all_loglog_r0_p7.dat};
\addplot table {data/all_loglog_r0_p11.dat};
\legend{$p = 7$, $p = 11$}
\end{axis}
\end{tikzpicture} 
}
\end{minipage}%
\begin{minipage}{.333\textwidth}
\resizebox{.9\linewidth}{!}{
\begin{tikzpicture}
\begin{axis}[cycle list name=color list, legend pos = south east, ymin=0.5,ymax=1.5]
\addplot table {data/ht_loglog_r0_p7.dat};
\addplot table {data/ht_loglog_r0_p11.dat};
\legend{$p = 7$, $p = 11$}
\end{axis}
\end{tikzpicture} 
}
\end{minipage}

\begin{minipage}{.333\textwidth}
\resizebox{.9\linewidth}{!}{
\begin{tikzpicture}
\begin{axis}[cycle list name=color list, legend pos = south east, ymin=5, ymax = 7.5]
\addplot table {data/prime_log_r0_p7.dat};
\addplot table {data/prime_log_r0_p11.dat};
\legend{$p = 7$, $p = 11$}
\end{axis}
\end{tikzpicture} 
}
\end{minipage}%
\begin{minipage}{.333\textwidth}
\resizebox{.9\linewidth}{!}{
\begin{tikzpicture}
\begin{axis}[cycle list name=color list, legend pos = south east, ymin=5, ymax = 7.5]
\addplot table {data/all_log_r0_p7.dat};
\addplot table {data/all_log_r0_p11.dat};
\legend{$p = 7$, $p = 11$}
\end{axis}
\end{tikzpicture} 
}
\end{minipage}%
\begin{minipage}{.333\textwidth}
\resizebox{.9\linewidth}{!}{
\begin{tikzpicture}
\begin{axis}[cycle list name=color list, legend pos = south east, ymin=5, ymax = 7.5]
\addplot table {data/ht_log_r0_p7.dat};
\addplot table {data/ht_log_r0_p11.dat};
\legend{$p = 7$, $p = 11$}
\end{axis}
\end{tikzpicture} 
}
\end{minipage}%

\begin{minipage}{.333\textwidth}
\resizebox{.9\linewidth}{!}{
\begin{tikzpicture}
\begin{axis}[cycle list name=color list, legend pos = south east, ymin = 50, ymax = 170]
\addplot table {data/prime_log2_r0_p7.dat};
\addplot table {data/prime_log2_r0_p11.dat};
\legend{$p = 7$, $p = 11$}
\end{axis}
\end{tikzpicture} 
}
\end{minipage}%
\begin{minipage}{.333\textwidth}
\resizebox{.9\linewidth}{!}{
\begin{tikzpicture}
\begin{axis}[cycle list name=color list, legend pos = south east, ymin = 50, ymax = 170]
\addplot table {data/all_log2_r0_p7.dat};
\addplot table {data/all_log2_r0_p11.dat};
\legend{$p = 7$, $p = 11$}
\end{axis}
\end{tikzpicture} 
}
\end{minipage}%
\begin{minipage}{.333\textwidth}
\resizebox{.9\linewidth}{!}{
\begin{tikzpicture}
\begin{axis}[cycle list name=color list, legend pos = south east, ymin = 50, ymax = 170]
\addplot table {data/ht_log2_r0_p7.dat};
\addplot table {data/ht_log2_r0_p11.dat};
\legend{$p = 7$, $p = 11$}
\end{axis}
\end{tikzpicture} 
}
\end{minipage}%

\caption{Rank 0 graphs with log log (top), log (middle) and $\log^2$ (bottom) weights for $\Epr^{\mathrm{SW}}$ (left) and  $\Eall^{\mathrm{SW}}$ (middle), and 
$\Eht^{\mathrm{db}}$ (right)}
\label{fig:r0comp}
\end{figure}

\begin{figure}
\begin{minipage}{.333\textwidth}
\resizebox{.9\linewidth}{!}{
\begin{tikzpicture}
\begin{axis}[cycle list name=color list, legend pos = south east, ymin=0,ymax=0.4]
\addplot table {data/prime_loglog_r1_p7.dat};
\addplot table {data/prime_loglog_r1_p11.dat};
\legend{$p = 7$, $p = 11$}
\end{axis}
\end{tikzpicture} 
}
\end{minipage}%
\begin{minipage}{.333\textwidth}
\resizebox{.9\linewidth}{!}{
\begin{tikzpicture}
\begin{axis}[cycle list name=color list, legend pos = south east, ymin=0,ymax=0.4]
\addplot table {data/all_loglog_r1_p7.dat};
\addplot table {data/all_loglog_r1_p11.dat};
\legend{$p = 7$, $p = 11$}
\end{axis}
\end{tikzpicture} 
}
\end{minipage}%
\begin{minipage}{.333\textwidth}
\resizebox{.9\linewidth}{!}{
\begin{tikzpicture}
\begin{axis}[cycle list name=color list, legend pos = south east, ymin=0,ymax=0.4]
\addplot table {data/ht_loglog_r1_p7.dat};
\addplot table {data/ht_loglog_r1_p11.dat};
\legend{$p = 7$, $p = 11$}
\end{axis}
\end{tikzpicture} 
}
\end{minipage}

\begin{minipage}{.333\textwidth}
\resizebox{.9\linewidth}{!}{
\begin{tikzpicture}
\begin{axis}[cycle list name=color list, legend pos = south east, ymin=0.5, ymax = 2.5]
\addplot table {data/prime_log_r1_p7.dat};
\addplot table {data/prime_log_r1_p11.dat};
\legend{$p = 7$, $p = 11$}
\end{axis}
\end{tikzpicture} 
}
\end{minipage}%
\begin{minipage}{.333\textwidth}
\resizebox{.9\linewidth}{!}{
\begin{tikzpicture}
\begin{axis}[cycle list name=color list, legend pos = south east, ymin=0.5, ymax = 2.5]
\addplot table {data/all_log_r1_p7.dat};
\addplot table {data/all_log_r1_p11.dat};
\legend{$p = 7$, $p = 11$}
\end{axis}
\end{tikzpicture} 
}
\end{minipage}%
\begin{minipage}{.333\textwidth}
\resizebox{.9\linewidth}{!}{
\begin{tikzpicture}
\begin{axis}[cycle list name=color list, legend pos = south east, ymin = 0.5, ymax = 2.5]
\addplot table {data/ht_log_r1_p7.dat};
\addplot table {data/ht_log_r1_p11.dat};
\legend{$p = 7$, $p = 11$}
\end{axis}
\end{tikzpicture} 
}
\end{minipage}%

\begin{minipage}{.333\textwidth}
\resizebox{.9\linewidth}{!}{
\begin{tikzpicture}
\begin{axis}[cycle list name=color list, legend pos = south east, ymin = 10, ymax = 50]
\addplot table {data/prime_log2_r1_p7.dat};
\addplot table {data/prime_log2_r1_p11.dat};
\legend{$p = 7$, $p = 11$}
\end{axis}
\end{tikzpicture} 
}
\end{minipage}%
\begin{minipage}{.333\textwidth}
\resizebox{.9\linewidth}{!}{
\begin{tikzpicture}
\begin{axis}[cycle list name=color list, legend pos = south east, ymin = 10, ymax = 50]
\addplot table {data/all_log2_r1_p7.dat};
\addplot table {data/all_log2_r1_p11.dat};
\legend{$p = 7$, $p = 11$}
\end{axis}
\end{tikzpicture} 
}
\end{minipage}%
\begin{minipage}{.333\textwidth}
\resizebox{.9\linewidth}{!}{
\begin{tikzpicture}
\begin{axis}[cycle list name=color list, legend pos = south east, ymin = 10, ymax = 50]
\addplot table {data/ht_log2_r1_p7.dat};
\addplot table {data/ht_log2_r1_p11.dat};
\legend{$p = 7$, $p = 11$}
\end{axis}
\end{tikzpicture} 
}
\end{minipage}%

\caption{Rank 1 graphs with log log (top), log (middle) and $\log^2$ (bottom) weights for $\Epr^{\mathrm{SW}}$ (left) and  $\Eall^{\mathrm{SW}}$ (middle), and 
$\Eht^{\mathrm{db}}$ (right)}
\label{fig:r1comp}
\end{figure}

\begin{figure}
\begin{minipage}{.333\textwidth}
\resizebox{.9\linewidth}{!}{
\begin{tikzpicture}
\begin{axis}[cycle list name=color list, legend pos = south east, ymin = -4, ymax = -1]
\addplot table {data/prime_loglog_r2_p7.dat};
\addplot table {data/prime_loglog_r2_p11.dat};
\legend{$p = 7$, $p = 11$}
\end{axis}
\end{tikzpicture} 
}
\end{minipage}%
\begin{minipage}{.333\textwidth}
\resizebox{.9\linewidth}{!}{
\begin{tikzpicture}
\begin{axis}[cycle list name=color list, legend pos = south east, ymin = -4, ymax = -1]
\addplot table {data/all_loglog_r2_p7.dat};
\addplot table {data/all_loglog_r2_p11.dat};
\legend{$p = 7$, $p = 11$}
\end{axis}
\end{tikzpicture} 
}
\end{minipage}%
\begin{minipage}{.333\textwidth}
\resizebox{.9\linewidth}{!}{
\begin{tikzpicture}
\begin{axis}[cycle list name=color list, legend pos = south east, ymin = -4, ymax = -1]
\addplot table {data/ht_loglog_r2_p7.dat};
\addplot table {data/ht_loglog_r2_p11.dat};
\legend{$p = 7$, $p = 11$}
\end{axis}
\end{tikzpicture} 
}
\end{minipage}

\begin{minipage}{.333\textwidth}
\resizebox{.9\linewidth}{!}{
\begin{tikzpicture}
\begin{axis}[cycle list name=color list, legend pos = south east, ymin = -17, ymax = -8]
\addplot table {data/prime_log_r2_p7.dat};
\addplot table {data/prime_log_r2_p11.dat};
\legend{$p = 7$, $p = 11$}
\end{axis}
\end{tikzpicture} 
}
\end{minipage}%
\begin{minipage}{.333\textwidth}
\resizebox{.9\linewidth}{!}{
\begin{tikzpicture}
\begin{axis}[cycle list name=color list, legend pos = south east, ymin = -17, ymax = -8]
\addplot table {data/all_log_r2_p7.dat};
\addplot table {data/all_log_r2_p11.dat};
\legend{$p = 7$, $p = 11$}
\end{axis}
\end{tikzpicture} 
}
\end{minipage}%
\begin{minipage}{.333\textwidth}
\resizebox{.9\linewidth}{!}{
\begin{tikzpicture}
\begin{axis}[cycle list name=color list, legend pos = south east, ymin = -17, ymax = -8]
\addplot table {data/ht_log_r2_p7.dat};
\addplot table {data/ht_log_r2_p11.dat};
\legend{$p = 7$, $p = 11$}
\end{axis}
\end{tikzpicture} 
}
\end{minipage}%

\begin{minipage}{.333\textwidth}
\resizebox{.9\linewidth}{!}{
\begin{tikzpicture}
\begin{axis}[cycle list name=color list, legend pos = south east, ymin = -300, ymax = -150]
\addplot table {data/prime_log2_r2_p7.dat};
\addplot table {data/prime_log2_r2_p11.dat};
\legend{$p = 7$, $p = 11$}
\end{axis}
\end{tikzpicture} 
}
\end{minipage}%
\begin{minipage}{.333\textwidth}
\resizebox{.9\linewidth}{!}{
\begin{tikzpicture}
\begin{axis}[cycle list name=color list, legend pos = south east, ymin = -300, ymax = -150]
\addplot table {data/all_log2_r2_p7.dat};
\addplot table {data/all_log2_r2_p11.dat};
\legend{$p = 7$, $p = 11$}
\end{axis}
\end{tikzpicture} 
}
\end{minipage}%
\begin{minipage}{.333\textwidth}
\resizebox{.9\linewidth}{!}{
\begin{tikzpicture}
\begin{axis}[cycle list name=color list, legend pos = north east, ymin = -300, ymax = -150]
\addplot table {data/ht_log2_r2_p7.dat};
\addplot table {data/ht_log2_r2_p11.dat};
\legend{$p = 7$, $p = 11$}
\end{axis}
\end{tikzpicture} 
}
\end{minipage}%

\caption{Rank 2 graphs with  log log (top), log (middle) and $\log^2$ (bottom) weights for $\Epr^{\mathrm{SW}}$ (left) and  $\Eall^{\mathrm{SW}}$ (middle), and 
$\Eht^{\mathrm{db}}$ (right)}
\label{fig:r2comp}
\end{figure}

\ifgraphs
\begin{figure}
\begin{minipage}{.333\textwidth}
\resizebox{.9\linewidth}{!}{
\begin{tikzpicture}
\begin{axis}[cycle list name=color list, legend pos = south east, ymax = -2, ymin = -8]
\addplot table {data/prime_loglog_r3_p7.dat};
\addplot table {data/prime_loglog_r3_p11.dat};
\legend{$p = 7$, $p = 11$}
\end{axis}
\end{tikzpicture} 
}
\end{minipage}%
\begin{minipage}{.333\textwidth}
\resizebox{.9\linewidth}{!}{
\begin{tikzpicture}
\begin{axis}[cycle list name=color list, legend pos = south east, ymax = -2, ymin = -8]
\addplot table {data/all_loglog_r3_p7.dat};
\addplot table {data/all_loglog_r3_p11.dat};
\legend{$p = 7$, $p = 11$}
\end{axis}
\end{tikzpicture} 
}
\end{minipage}%
\begin{minipage}{.333\textwidth}
\resizebox{.9\linewidth}{!}{
\begin{tikzpicture}
\begin{axis}[cycle list name=color list, legend pos = south east, ymax = -2, ymin = -8]
\addplot table {data/ht_loglog_r3_p7.dat};
\addplot table {data/ht_loglog_r3_p11.dat};
\legend{$p = 7$, $p = 11$}
\end{axis}
\end{tikzpicture} 
}
\end{minipage}

\begin{minipage}{.333\textwidth}
\resizebox{.9\linewidth}{!}{
\begin{tikzpicture}
\begin{axis}[cycle list name=color list, legend pos = south east, ymax = -20, ymin = -40]
\addplot table {data/prime_log_r3_p7.dat};
\addplot table {data/prime_log_r3_p11.dat};
\legend{$p = 7$, $p = 11$}
\end{axis}
\end{tikzpicture} 
}
\end{minipage}%
\begin{minipage}{.333\textwidth}
\resizebox{.9\linewidth}{!}{
\begin{tikzpicture}
\begin{axis}[cycle list name=color list, legend pos = north east, ymax = -20, ymin = -40]
\addplot table {data/all_log_r3_p7.dat};
\addplot table {data/all_log_r3_p11.dat};
\legend{$p = 7$, $p = 11$}
\end{axis}
\end{tikzpicture} 
}
\end{minipage}%
\begin{minipage}{.333\textwidth}
\resizebox{.9\linewidth}{!}{
\begin{tikzpicture}
\begin{axis}[cycle list name=color list, legend pos = south east, ymax = -20, ymin = -40]
\addplot table {data/ht_log_r3_p7.dat};
\addplot table {data/ht_log_r3_p11.dat};
\legend{$p = 7$, $p = 11$}
\end{axis}
\end{tikzpicture} 
}
\end{minipage}%

\begin{minipage}{.333\textwidth}
\resizebox{.9\linewidth}{!}{
\begin{tikzpicture}
\begin{axis}[cycle list name=color list, legend pos = south east, ymax = -400, ymin = -800]
\addplot table {data/prime_log2_r3_p7.dat};
\addplot table {data/prime_log2_r3_p11.dat};
\legend{$p = 7$, $p = 11$}
\end{axis}
\end{tikzpicture} 
}
\end{minipage}%
\begin{minipage}{.333\textwidth}
\resizebox{.9\linewidth}{!}{
\begin{tikzpicture}
\begin{axis}[cycle list name=color list, legend pos = south east, ymax = -400, ymin = -800]
\addplot table {data/all_log2_r3_p7.dat};
\addplot table {data/all_log2_r3_p11.dat};
\legend{$p = 7$, $p = 11$}
\end{axis}
\end{tikzpicture} 
}
\end{minipage}%
\begin{minipage}{.333\textwidth}
\resizebox{.9\linewidth}{!}{
\begin{tikzpicture}
\begin{axis}[cycle list name=color list, legend pos = north east, ymax = -400, ymin = -800]
\addplot table {data/ht_log2_r3_p7.dat};
\addplot table {data/ht_log2_r3_p11.dat};
\legend{$p = 7$, $p = 11$}
\end{axis}
\end{tikzpicture} 
}
\end{minipage}%

\caption{Rank 3 graphs with log log (top), log (middle) and $\log^2$ (bottom) weights for $\Epr^{\mathrm{SW}}$ (left) and  $\Eall^{\mathrm{SW}}$ (middle), and 
$\Eht^{\mathrm{db}}$ (right)}
\label{fig:r3comp}
\end{figure}

\begin{figure}
\begin{minipage}{.333\textwidth}
\resizebox{.9\linewidth}{!}{
\begin{tikzpicture}
\begin{axis}[cycle list name=color list, legend pos = south east, ymax = -5, ymin = -13]
\addplot table {data/prime_loglog_r4_p7.dat};
\addplot table {data/prime_loglog_r4_p11.dat};
\legend{$p = 7$, $p = 11$}
\end{axis}
\end{tikzpicture} 
}
\end{minipage}%
\begin{minipage}{.333\textwidth}
\resizebox{.9\linewidth}{!}{
\begin{tikzpicture}
\begin{axis}[cycle list name=color list, legend pos = south east, ymax = -5, ymin = -13]
\addplot table {data/all_loglog_r4_p7.dat};
\addplot table {data/all_loglog_r4_p11.dat};
\legend{$p = 7$, $p = 11$}
\end{axis}
\end{tikzpicture} 
}
\end{minipage}%
\begin{minipage}{.333\textwidth}
\resizebox{.9\linewidth}{!}{
\begin{tikzpicture}
\begin{axis}[cycle list name=color list, legend pos = south east, ymax = -5, ymin = -13]
\addplot table {data/ht_loglog_r4_p7.dat};
\addplot table {data/ht_loglog_r4_p11.dat};
\legend{$p = 7$, $p = 11$}
\end{axis}
\end{tikzpicture} 
}
\end{minipage}

\begin{minipage}{.333\textwidth}
\resizebox{.9\linewidth}{!}{
\begin{tikzpicture}
\begin{axis}[cycle list name=color list, legend pos = south east, ymax = -40, ymin = -70]
\addplot table {data/prime_log_r4_p7.dat};
\addplot table {data/prime_log_r4_p11.dat};
\legend{$p = 7$, $p = 11$}
\end{axis}
\end{tikzpicture} 
}
\end{minipage}%
\begin{minipage}{.333\textwidth}
\resizebox{.9\linewidth}{!}{
\begin{tikzpicture}
\begin{axis}[cycle list name=color list, legend pos = north east, ymax = -40, ymin = -70]
\addplot table {data/all_log_r4_p7.dat};
\addplot table {data/all_log_r4_p11.dat};
\legend{$p = 7$, $p = 11$}
\end{axis}
\end{tikzpicture} 
}
\end{minipage}%
\begin{minipage}{.333\textwidth}
\resizebox{.9\linewidth}{!}{
\begin{tikzpicture}
\begin{axis}[cycle list name=color list, legend pos = south east, ymax = -40, ymin = -70]
\addplot table {data/ht_log_r4_p7.dat};
\addplot table {data/ht_log_r4_p11.dat};
\legend{$p = 7$, $p = 11$}
\end{axis}
\end{tikzpicture} 
}
\end{minipage}%

\begin{minipage}{.333\textwidth}
\resizebox{.9\linewidth}{!}{
\begin{tikzpicture}
\begin{axis}[cycle list name=color list, legend pos = south east, ymax = -750, ymin = -1300]
\addplot table {data/prime_log2_r4_p7.dat};
\addplot table {data/prime_log2_r4_p11.dat};
\legend{$p = 7$, $p = 11$}
\end{axis}
\end{tikzpicture} 
}
\end{minipage}%
\begin{minipage}{.333\textwidth}
\resizebox{.9\linewidth}{!}{
\begin{tikzpicture}
\begin{axis}[cycle list name=color list, legend pos = south east, ymax = -750, ymin = -1300]
\addplot table {data/all_log2_r4_p7.dat};
\addplot table {data/all_log2_r4_p11.dat};
\legend{$p = 7$, $p = 11$}
\end{axis}
\end{tikzpicture} 
}
\end{minipage}%
\begin{minipage}{.333\textwidth}
\resizebox{.9\linewidth}{!}{
\begin{tikzpicture}
\begin{axis}[cycle list name=color list, legend pos = north east, ymax = -750, ymin = -1300]
\addplot table {data/ht_log2_r4_p7.dat};
\addplot table {data/ht_log2_r4_p11.dat};
\legend{$p = 7$, $p = 11$}
\end{axis}
\end{tikzpicture} 
}
\end{minipage}%

\caption{Rank 4 graphs with log log (top), log (middle) and $\log^2$ (bottom) weights for $\Epr^{\mathrm{SW}}$ (left) and  $\Eall^{\mathrm{SW}}$ (middle), and 
$\Eht^{\mathrm{db}}$ (right)}
\label{fig:r4comp}
\end{figure}
\fi

When $r=1$, there is another anomaly besides the log log weight 
for $\calE = \Eall^{\mathrm{SW}}$.  Namely, for 
$\calE = \Epr^{\mathrm{SW}}$ the log weighted graph appears to be decreasing
for $p=11$ and flat or slightly increasing for $p=7$.  
Again we suspect these anomalies are due to limited data---the prime conductor
database contains significantly fewer curves than the other databases.

Supporting this idea, in \cref{fig:prange-r2} we present rank 2 graphs for 
$\calE = \Eall^{\mathrm{SW}}$ and log weight for a variety of primes,
and we see that the graphs have the same shape for each of our choices
of primes.  (Of course we expect that the weighted averages tend to have
larger absolute value for larger $p$ because the Hecke bound grows with $p$.)
If one plots the same graphs for rank 0 or 1, we again see the shape
is essentially independent of $p$ (the case $p=79$ case with $r=0$ is an exception).
Now if we look at rank 3 graphs for some of our larger values of $p$,
the shape seems to depend on $p$.  We plotted these graphs together with
the corresponding graphs for the height database in \cref{fig:prange-r3}.
In the case of the height database, where there are many more rank 3 curves,
we again see the graphs have essentially the same shape for each $p$.

Thus we expect that the general behavior of weighted averages is independent
of $p$, but it may require a very large amount of data to numerically see the asymptotic
behavior, especially for larger $p$ and $\phi$ having logarithmic growth.

In summary, we find the data quite suggestive that, for each of our 3 families,
the weighted averages
 $\calA_r^\calE(p, X; \phi)$ tend to 0 if $\phi(N) \ll \log \log N$ and tend to $+\infty$
(resp.\ $-\infty$) if $\phi(N) \gg (\log N)^2$ and $r \le 1$ (resp.\ $r \ge 2$).
When $\phi(N) = \log N$, some of the graphs appear very flat, and some do not.
Because of the variation of these graphs, the precise asymptotic behavior is not 
apparent with the databases currently available to us, but it seems plausible
that the value of $\delta$ in \cref{conj2} may be 1.


\begin{figure}
\begin{minipage}{.5\textwidth}
\resizebox{.9\linewidth}{!}{
\begin{tikzpicture}
\begin{axis}[cycle list name=color list, legend pos = outer north east, ymax = -5, ymin = -20]
\addplot table {data/all_log_r2_p2.dat};
\addplot table {data/all_log_r2_p3.dat};
\addplot table {data/all_log_r2_p5.dat};
\addplot table {data/all_log_r2_p7.dat};
\addplot table {data/all_log_r2_p11.dat};
\addplot table {data/all_log_r2_p17.dat};
\legend{$p = 2$, $p=3$, $p = 5$, $p = 7$, $p = 11$, $p=17$}
\end{axis}
\end{tikzpicture} 
}
\end{minipage}%
\begin{minipage}{.5\textwidth}
\resizebox{.9\linewidth}{!}{
\begin{tikzpicture}
\begin{axis}[cycle list name=color list, legend pos = outer north east,ymax=-15,ymin=-30]
\addplot table {data/all_log_r2_p43.dat};
\addplot table {data/all_log_r2_p79.dat};
\addplot table {data/all_log_r2_p101.dat};
\addplot table {data/all_log_r2_p197.dat};
\addplot table {data/all_log_r2_p223.dat};
\addplot table {data/all_log_r2_p271.dat};
\legend{$p = 43$, $p=79$, $p = 101$, $p = 197$, $p = 223$, $p=271$}
\end{axis}
\end{tikzpicture} 
}
\end{minipage}
\caption{Log weight with $r=2$ and various $p$
for $\Eall^{\mathrm{SW}}$}
\label{fig:prange-r2}
\end{figure}

\begin{figure}
\begin{minipage}{.5\textwidth}
\resizebox{.9\linewidth}{!}{
\begin{tikzpicture}
\begin{axis}[cycle list name=color list, legend pos = outer north east, ymax = -35, ymin = -65]
\addplot table {data/all_log_r3_p43.dat};
\addplot table {data/all_log_r3_p79.dat};
\addplot table {data/all_log_r3_p101.dat};
\addplot table {data/all_log_r3_p197.dat};
\addplot table {data/all_log_r3_p223.dat};
\addplot table {data/all_log_r3_p271.dat};
\legend{$p = 43$, $p=79$, $p = 101$, $p = 197$, $p = 223$, $p=271$}
\end{axis}
\end{tikzpicture} 
}
\end{minipage}%
\begin{minipage}{.5\textwidth}
\resizebox{.9\linewidth}{!}{
\begin{tikzpicture}
\begin{axis}[cycle list name=color list, legend pos = outer north east, ymax = -35, ymin = -65]
\addplot table {data/ht_log_r3_p43.dat};
\addplot table {data/ht_log_r3_p79.dat};
\addplot table {data/ht_log_r3_p101.dat};
\addplot table {data/ht_log_r3_p197.dat};
\addplot table {data/ht_log_r3_p223.dat};
\addplot table {data/ht_log_r3_p271.dat};
\legend{$p = 43$, $p=79$, $p = 101$, $p = 197$, $p = 223$, $p=271$}
\end{axis}
\end{tikzpicture} 
}
\end{minipage}
\caption{Log weight with $r=3$ and various $p$
for $\Eall^{\mathrm{SW}}$ (left) and $\Eht^{\mathrm{db}}$ (right)}
\label{fig:prange-r3}
\end{figure}


\subsection*{Acknowledgements}
This work was supported by a grant from the Simons Foundation (512927, KM).
Some of the computing for this project was performed at the OU Supercomputing Center 
for Education \& Research (OSCER) at the University of Oklahoma (OU). 


\begin{bibdiv}
\begin{biblist}

\bib{height}{article}{
   author={Balakrishnan, Jennifer S.},
   author={Ho, Wei},
   author={Kaplan, Nathan},
   author={Spicer, Simon},
   author={Stein, William},
   author={Weigandt, James},
   title={Databases of elliptic curves ordered by height and distributions
   of Selmer groups and ranks},
   journal={LMS J. Comput. Math.},
   volume={19},
   date={2016},
   number={suppl. A},
   pages={351--370},
}

\bib{BGHT}{article}{
   author={Barnet-Lamb, Tom},
   author={Geraghty, David},
   author={Harris, Michael},
   author={Taylor, Richard},
   title={A family of Calabi-Yau varieties and potential automorphy II},
   journal={Publ. Res. Inst. Math. Sci.},
   volume={47},
   date={2011},
   number={1},
   pages={29--98},
   issn={0034-5318},
   review={\MR{2827723}},
   doi={10.2977/PRIMS/31},
}

\bib{BMSW}{article}{
   author={Bektemirov, Baur},
   author={Mazur, Barry},
   author={Stein, William},
   author={Watkins, Mark},
   title={Average ranks of elliptic curves: tension between data and
   conjecture},
   journal={Bull. Amer. Math. Soc. (N.S.)},
   volume={44},
   date={2007},
   number={2},
   pages={233--254},
   issn={0273-0979},
}

\bib{BGR}{article}{
   author={Bennett, Michael A.},
   author={Gherga, Adela},
   author={Rechnitzer, Andrew},
   title={Computing elliptic curves over $\Bbb{Q}$},
   journal={Math. Comp.},
   volume={88},
   date={2019},
   number={317},
   pages={1341--1390},
   issn={0025-5718},
}

\bib{birch}{article}{
   author={Birch, B. J.},
   title={How the number of points of an elliptic curve over a fixed prime
   field varies},
   journal={J. London Math. Soc.},
   volume={43},
   date={1968},
   pages={57--60},
   issn={0024-6107},
}

\bib{BSD}{article}{
   author={Birch, B. J.},
   author={Swinnerton-Dyer, H. P. F.},
   title={Notes on elliptic curves. II},
   journal={J. Reine Angew. Math.},
   volume={218},
   date={1965},
   pages={79--108},
   issn={0075-4102},
}

\bib{me:ref-dim}{article}{
   author={Martin, Kimball},
   title={Refined dimensions of cusp forms, and equidistribution and bias of
   signs},
   journal={J. Number Theory},
   volume={188},
   date={2018},
   pages={1--17},
   issn={0022-314X},
}

\bib{me:maeda}{article}{
   author={Martin, Kimball},
   title={An on-average Maeda-type conjecture in the level aspect},
   journal={Proc. Amer. Math. Soc.},
   volume={149},
   date={2021},
   pages={1373--1386},
}

\bib{me:qmf-zeroes}{article}{
   author={Martin, Kimball},
   author={Wiebe, Jordan},
   title={Zeroes of quaternionic modular forms and central $L$-values},
   journal={J. Number Theory},
   volume={217},
   date={2020},
   pages={460--494},
   issn={0022-314X},
}

\bib{murty-sinha}{article}{
   author={Murty, M. Ram},
   author={Sinha, Kaneenika},
   title={Factoring newparts of Jacobians of certain modular curves},
   journal={Proc. Amer. Math. Soc.},
   volume={138},
   date={2010},
   number={10},
   pages={3481--3494},
   issn={0002-9939},
}

\bib{PPVW}{article}{
   author={Park, Jennifer},
   author={Poonen, Bjorn},
   author={Voight, John},
   author={Wood, Melanie Matchett},
   title={A heuristic for boundedness of ranks of elliptic curves},
   journal={J. Eur. Math. Soc. (JEMS)},
   volume={21},
   date={2019},
   number={9},
   pages={2859--2903},
   issn={1435-9855},
}

\bib{skoruppa-zagier}{article}{
   author={Skoruppa, Nils-Peter},
   author={Zagier, Don},
   title={Jacobi forms and a certain space of modular forms},
   journal={Invent. Math.},
   volume={94},
   date={1988},
   number={1},
   pages={113--146},
   issn={0020-9910},
}

\bib{stein-watkins}{article}{
   author={Stein, William A.},
   author={Watkins, Mark},
   title={A database of elliptic curves---first report},
   conference={
      title={Algorithmic number theory},
      address={Sydney},
      date={2002},
   },
   book={
      series={Lecture Notes in Comput. Sci.},
      volume={2369},
      publisher={Springer, Berlin},
   },
   date={2002},
   pages={267--275},
}

\bib{yamauchi}{article}{
   author={Yamauchi, Masatoshi},
   title={On the traces of Hecke operators for a normalizer of $\Gamma
   _{0}(N)$},
   journal={J. Math. Kyoto Univ.},
   volume={13},
   date={1973},
   pages={403--411},
   issn={0023-608X},
}

\end{biblist}
\end{bibdiv}

\end{document}